\documentclass[oneside,english]{amsart}
\usepackage[latin9]{inputenc}
\usepackage{geometry}
\geometry{verbose,tmargin=2cm,bmargin=2cm,lmargin=2cm,rmargin=2cm}
\usepackage{amstext}
\usepackage{amsthm}
\usepackage{amssymb}
\usepackage{hyperref}

\usepackage[dvipsnames]{xcolor}

\definecolor{b}{HTML}{4472c4}
\definecolor{o}{HTML}{ED7D31}
\definecolor{g}{HTML}{70ad47}

\definecolor{t}{RGB}{40,154,150}

\newcommand\co[1]{{#1}}

%\usepackage{pgfplots}
%\usepackage{tikz}
%\usetikzlibrary{arrows,matrix,positioning}
%\usetikzlibrary{decorations.pathreplacing,decorations.pathmorphing,calc}
%\usepackage{graphicx}
\newtheorem{theor}{Theorem}[section]
\newtheorem*{theorem*}{Theorem}

\newtheorem{lemma}[theor]{Lemma}
\newtheorem{cor}[theor]{Corollary}
\newtheorem{defi}[theor]{Definition}
\newtheorem{prop}[theor]{Proposition}

\theoremstyle{definition}
\newtheorem{rem}[theor]{Remark}

\theoremstyle{plain}
\newtheorem*{conj*}{Conjecture}

% USEFUL COMMANDS:

\newcommand{\R}{\mathbb{R}}

\def\Prob{{\mathbb P}}

\def\dist{{\rm dist}}

%constants

\title[Dimension-dependent concentration for convex Lipschitz functions]
{On dimension-dependent concentration for convex Lipschitz functions in product spaces}

\author{Han Huang
\email{hhuang421@gatech.edu}}

\author{Konstantin Tikhomirov
\email{ktikhomirov6@gatech.edu}}
\thanks{K.T. is partially supported by the Sloan Research Fellowship}

\def\R{{\mathbb R}}

\def\Med{{\rm Med}}

%special sets

% probability

\def\Prob{{\mathbb P}}
\def\Exp{{\mathbb E}}

\newcommand{\Var}{{\rm Var}}

% linear algebra and Euclidean geometry

%\def\col{{\rm col}}

\def\dist{{\rm dist}}

\date{\today}

\begin{document}

\maketitle

\begin{abstract}
Let $n\geq 1$, $K>0$, and let $X=(X_1,X_2,\dots,X_n)$ be a random vector in $\R^n$ with independent $K$--subgaussian
components.
We show that for every $1$--Lipschitz convex function $f$ in $\R^n$ (the Lipschitzness with respect to the Euclidean metric),
$$
\max\big(\Prob\big\{f(X)-\Med\,f(X)\geq t\big\},\Prob\big\{f(X)-\Med\,f(X)\leq -t\big\}\big)\leq \exp\bigg(
-\frac{c\,t^2}{K^2\log\big(2+\frac{K^2 n}{t^2}\big)}\bigg),\quad t>0,
$$
where $c>0$ is a universal constant. The estimates are optimal in the sense that for every $n\geq \tilde C$ and $t>0$
there exist a product probability distribution $X$ in $\R^n$ with $K$--subgaussian components, and
a $1$--Lipschitz convex function $f$, with
$$
\Prob\big\{\big|f(X)-\Med\,f(X)\big|\geq t\big\}\geq \tilde c\,\exp\bigg(
-\frac{\tilde C\,t^2}{K^2\log\big(2+\frac{K^2n}{t^2}\big)}\bigg).
$$
The obtained deviation estimates for subgaussian variables are in sharp contrast with the case of variables with bounded
$\|X_i\|_{\psi_p}$--quasi-norms for $p\in(0,2)$.
\end{abstract}

\section{Introduction}

Concentration in product probability spaces is an active research direction with numerous available results
(see, in particular, monographs \cite{Ledoux,BLM}).
Among classical examples of such results are Bernstein-type inequalities \cite[Chapter~2]{BLM}
for linear combinations of independent random variables, and the isoperimetric inequality in the Gauss space
which implies subgaussian dimension-free concentration \cite{ST,BorGauss} (see also \cite{Ehrhard,BakryLedoux,BobkovGauss,BartheMaurey}
as well as \cite[Theorem~V.1]{MS}).

Let $(\Omega_i,\Sigma_i,\mu_i)$, $i\geq 1$, be probability spaces, and for a given $n\geq 1$, let $\mathcal F_n$
be a subset of real valued measurable functions $f$ on the product space $(\prod_{i=1}^n \Omega_i,
\prod_{i=1}^n\Sigma_i,\mu_1\times\dots\times \mu_n)$. A question is to estimate for every $t>0$ the quantity
\begin{equation}\label{31941313049}
\sup\limits_{f\in\mathcal F_n}\;\max\big((\mu_1\times\dots\times \mu_n)\big\{f-\Med\, f\geq t\big\},
(\mu_1\times\dots\times \mu_n)\big\{f-\Med\, f\leq -t\big\}\big)
\end{equation}
(we focus on deviation from the median; see, for example,
\cite[Propositions~1.7,~1.8]{Ledoux} for relations between deviations from the mean
and the median).

\bigskip

First, let $\mathcal F_n$ be the class of $1$-Lipschitz functions \co{on} $\R^n$
(here and further in this note, the Lipschitzness is with respect to the standard Euclidean metric in $\R^n$),
and $\mu_1,\dots,\mu_n$ be Borel probability measures \co{on} $\R$.
In particular, it is known that whenever measures $\mu_i$ satisfy the {\it Poincar\'e inequality} with a non-trivial constant $\lambda>0$, i.e
$$
\lambda\Var_{\mu_i}\,h\leq \Exp_{\mu_i}|h'|^2,\quad 1\leq i\leq n,\quad\mbox{ for every smooth function $h:\R\to\R$},
$$
then the product measure $\mu_1\times\dots\times \mu_n$ satisfies the Poincar\'e inequality \co{on} $\R^n$ with the same constant, which in turn implies subexponential dimension-free upper bound
$\exp(-c t)$ for \eqref{31941313049}, where $c>0$ depends only on the Poincar\'e constant \cite{GM83}
(see also, for example, \cite[Chapter~2]{RamonNotes}).
Conversely, if $\mu=\mu_1=\mu_2=\dots$ is a probability measure on $\R$, and for some $t>0$,
\eqref{31941313049}
is uniformly (over $n$) upper bounded by a quantity strictly less than $1/2$ then necessarily $\mu$ 
satisfies a Poincar\'e inequality with a non-trivial constant \cite{GRS2015}.

A connection between concentration and measure transport inequalities was first highlighted in \cite{Marton1,Marton2}.
In particular, it has been established in the literature (see \cite[Section~7]{OV}, \cite[Section~5]{Gozlan}, \cite[Corollary~5.1]{BGL})
that exponential dimension-free concentration for $\mu^{\times n}$, $n\geq 1$, is equivalent to the inequality
$$
\inf\limits_{X\sim \mu,\,Y\sim\nu} \Exp\,\min\big(|X-Y|,|X-Y|^2\big)
\leq C\int_\R \frac{d\nu}{d\mu}\log\big(\frac{d\nu}{d\mu}\big)\,d\mu $$
\co{ for every probability measure $\nu$ absolutely continuous w.r.t $\mu$, }
where the infimum is taken \co{ over all joint laws of $(X,Y)$ } with $X\sim\mu$ and $Y\sim\nu$.

A complete characterization of product measures which enjoy dimension-free {\it subgaussian} concentration
was obtained in \cite{Gozlan} (see also earlier work \cite{TalGaussT2}).
It was shown in \cite{Gozlan} that given a measure $\mu$ on $\R$, the quantity
in \eqref{31941313049} is upper bounded by $C\exp(-ct^2)$ for some $C,c>0$ (independent of $n$) if and only if
there is a constant $D>0$ such that $\mu$ satisfies the following measure transportation inequality (the {\it $T_2$-inequality}):
$$
\inf_{X\sim \mu,\;Y\sim \nu}\Exp\,|X-Y|^2\leq 2D\,\int_\R \frac{d\nu}{d\mu}\log\big(\frac{d\nu}{d\mu}\big)\,d\mu\quad
\mbox{for every probability measure $\nu$ absolutely continuous w.r.t $\mu$},
$$
where the infimum is over all pairs of random variables $X,Y$ on $\R$
with $X\sim\mu$ and $Y\sim\nu$.
We refer to \cite{Gozlan} for a more general statement.

We would like to mention the {\it logarithmic Sobolev inequality}
as a well known sufficient condition for subgaussian concentration \cite{DavSim}, \cite[Chapter~5]{Ledoux},
as well as inequalities interpolating between log-Sobolev
and Poincar\'e \cite{LO} as sufficient conditions for dimension-free concentration estimates of the form $\exp(-ct^p)$
for the quantities in \eqref{31941313049}.

\bigskip

Following works of Talagrand \cite{TalConvex1,TalConvex2}, it has been shown in various settings
that by restricting the class of Lipschitz functions to convex (or concave) functions, the worst-case
concentration estimates can be significantly improved.
As an illustration, it is well known that for every $n\geq 1$,
there exists a (non-convex) $1$-Lipschitz function $f_n$ in $\R^n$ such that for the random vector $X^{(n)}$
uniformly distributed on vertices of the cube $\{-1,1\}^n$, one has $\Var\,f_n(X^{(n)})=\theta(\sqrt{n})$
(see, for example, \cite[Problem~4.9]{RamonNotes}).
On the other hand, a classical result of Talagrand \cite{TalConvex1,TalConvex2} asserts that there is a universal constant $c>0$ such that,
with $\mathcal F_n:=\{\mbox{Convex $1$-Lipschitz functions in $\R^n$}\}$, and
with $\mu_1=\mu_2=\dots=\mu_n$ being the uniform measure on $\{-1,1\}$, the quantity in \eqref{31941313049}
is upper bounded by $\exp(-c\,t^2)$, for a universal constant $c>0$.
An extension of Talargand's argument shows that \eqref{31941313049} can be upper bounded by $\exp(-c\,t^2/d^2)$
for the class of convex $1$-Lipschitz
functions whenever $\mu_1,\dots,\mu_n$ are \co{measures with supports of maximum diameter at most $d>0$ \cite[Chapter~4]{Ledoux}:
\begin{equation}\label{apkejfnpfiunqpfijn}
\begin{split}
\sup\limits_{f\tiny\mbox{ convex $1$-Lipschitz}}\;&\max\big((\mu_1\times\dots\times \mu_n)\big\{f-\Med\, f\geq t\big\},
(\mu_1\times\dots\times \mu_n)\big\{f-\Med\, f\leq -t\big\}\big)\\
&\leq \exp\big(-c\,t^2/d^2\big),\quad t>0.
\end{split}
\end{equation}
A }complete characterization of probability measures $\mu$ on $\R$ such that
\eqref{31941313049} admits dimension-free subgaussian concentration for convex $1$-Lipschitz
functions with $\mu=\mu_1=\mu_2=\dots$,
was obtained in \cite{GRST2017,GRSST} (see also \cite{Adamczak} for an earlier result in this direction).
Both necessary and sufficient condition in that setting is $\mu((t+s,\infty))\leq 2\exp(-cs^2)\mu((t,\infty))$ and
$\mu((-\infty,-t-s))\leq 2\exp(-cs^2)\mu((-\infty,-t))$ for all $s,t>0$ for some constant $c>0$,
which can be interpreted as the condition that the distribution $\mu$ has ``no gaps''.
The convex subgaussian concentration, in turn, is implied by the {\it convex} log-Sobolev inequality (see \cite{SS}).
For results dealing with dimension-free subexponential-type concentration for convex Lipschitz functions, we refer
to \cite{BG1999,GRSST,AS2019,GRS2015}.

\bigskip

Whereas necessary and sufficient conditions for dimension-free concentration are well understood,
those conditions are rather strong.
For example, it is easy to construct an unbounded \co{subgaussian} distribution which does not satisfy the condition
for dimension-free subgaussian concentration mentioned above.
%As another illustration, take $n$ i.i.d Bernoulli($q$) random variables $b_1,b_2,\dots,b_n$, where $q>0$ is a small parameter.
%Talagrand's convex distance concentration inequality then implies that for every $1$-Lipschitz convex function $f$ in $\R^n$,
%$\Prob\big\{|f(b_1,\dots,b_n)-\Med\,f(b_1,\dots,b_n)|\geq t\big\}\leq 2\exp(-ct^2)$, $t>0$, for a universal constant $c>0$.
%However, when $q\to 0$ as $n\to\infty$ and $t$ is sufficiently large, it can be checked that the bound is suboptimal.

The main purpose of this note is to give {\it optimal
dimension-dependent} concentration bound in the class of {\it subgaussian product measures}
for convex $1$-Lipschitz functions.
However, we would like to start with a discussion of $\|\cdot\|_{\psi_p}$-bounded variables for $p\in(0,2)$, to emphasize the difference in tail behavior.
We recall the definition of the $\|\cdot\|_{\psi_p}$-(quasi-)norm. Given a real valued random variable $Y$, we set
$$
\|Y\|_{\psi_p}:=\inf\big\{\lambda>0:\,\Exp\,\exp(|Y|^p/\lambda^p)\leq 2\big\},\quad p>0.
$$
\co{In particular, $\|Y\|_{\psi_2}$ is the subgaussian constant of $Y$,
and $\|Y\|_{\psi_1}$ is the subexponential constant.
A random variable $Y$ with a bounded $\|\cdot\|_{\psi_p}$-norm satisfies, in view of Markov's inequality,}
$$
\Prob\{ |Y| \ge t \} \leq 2\exp( - t^p/\|Y\|_{\psi_p}^p),\quad t>0.
$$
%The $\psi_p$ norms quantify the tail behaviour of a random variable in the sense that, if $\|Y\|_{\psi_p}$ is finite for some $p\ge 1$, then simply by Markov's inequality 
%$$
%\forall t \ge 0,\, 
% \Prob\{ |Y| \ge t \} = \Prob\big\{ \exp( |Y|^p/ \|Y\|_{\psi_p}^p) \ge \exp( t^p/\|Y\|_{\psi_p}^p) \big\}\le 2 \exp( - t^p/\|Y\|_{\psi_p}^p). 
%$$
\begin{theor}\label{9871-5091-09}
For every $p\in\co{(0},2)$ there is a $c_p>0$ depending only on $p$ with the following property. Let $K>0$, $n\geq 2$, and let
$X=(X_1,X_2,\dots,X_n)$ be a vector of independent random variables with $\|X_i\|_{\psi_p}\leq K$, $1\leq i\leq n$.
Then for every $1$-Lipschitz convex function $f$ in $\R^n$,
we have
$$
\Prob\big\{|f(X)-\Med\,f(X)|\geq t\big\}\leq 2\exp\big(-c_p\, t^p/K^p\big)
+2\exp\big(-c_p\,t^2/\big(K^2(\log n)^{2/p}\big)\big),\quad t>0.
$$
\end{theor}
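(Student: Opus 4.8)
The plan is to combine Talagrand's convex concentration inequality for products of bounded measures with a truncation argument; the essential point is that for $p<2$ one cannot afford exponential moments of $X_i^2$, so the contribution of a single large coordinate must be separated off by hand. Fix $t>0$ and write $L:=(\log n)^{1/p}$, so $L^2=(\log n)^{2/p}$. First I would dispose of small $t$: if $t\le C_p KL$ for a constant $C_p$ depending only on $p$, then $t^2/(K^2(\log n)^{2/p})\le C_p^2$, so after choosing $c_p\le(\log 2)/C_p^2$ the claimed bound is at least $1$ and there is nothing to prove; hence one may assume $t>C_pKL$. Now fix a large universal constant $\beta$, set $R:=K(\beta\log n)^{1/p}$ (so $R\ge K$), and let $Y_i:=X_i\mathbf 1_{\{|X_i|\le R\}}$ and $W:=\sum_{i=1}^n X_i^2\mathbf 1_{\{|X_i|>R\}}$, so that $|f(X)-f(Y)|\le\|X-Y\|_2=\sqrt{W}$ by $1$-Lipschitzness. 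The coordinates of $Y$ are independent and supported in $[-R,R]$, so the convex concentration inequality for products of measures with bounded support (see \cite[Chapter~4]{Ledoux}), applied to the restriction of $f$ to $[-R,R]^n$, gives a universal $c>0$ with
$$\Prob\big\{|f(Y)-\Med f(Y)|\ge s\big\}\le 2\exp(-cs^2/R^2),\qquad s>0.$$

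The second ingredient is a tail bound for $W$. Since $\|X_i\|_{\psi_p}\le K$ yields $\Prob\{|X_i|>u\}\le 2e^{-u^p/K^p}$, the variable $W$ is a sum of independent nonnegative terms, each of which vanishes with probability at least $1-2n^{-\beta}$ and otherwise is of size at least $R^2$ with a $\psi_{p/2}$-type tail above $R^2$. A routine computation for such a sum --- in which the dominant contribution comes from a single large $|X_i|$, and the union-bound factor $n$ is absorbed once the level exceeds a constant multiple of $K^2(\log n)^{2/p}$ (this is where $\beta$ is taken large) --- yields universal $C,c'>0$ with
$$\Prob\{W\ge u\}\le C\exp\big(-c'\,u^{p/2}/K^p\big)\qquad\text{whenever }u\ge C K^2(\log n)^{2/p}.$$
Applying the two displays at levels of order $R^2$ shows $\Prob\{|f(X)-\Med f(Y)|\ge K_1R\}<1/2$ for a suitable large universal $K_1$, whence $|\Med f(X)-\Med f(Y)|\le K_1R$.

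Now I would assemble the pieces. Choosing $C_p$ above large enough that the already-settled case $t\le C_pKL$ subsumes $\{t\le 2K_1R\}$, I may assume $t>2K_1R$, so that $\{|f(X)-\Med f(X)|\ge t\}\subseteq\{|f(Y)-\Med f(Y)|\ge t/4\}\cup\{W\ge t^2/16\}$; the two displayed bounds apply (the second because $t^2/16\ge CK^2(\log n)^{2/p}$ in this range), and substituting $R^2=K^2(\beta\log n)^{2/p}$ gives
$$\Prob\big\{|f(X)-\Med f(X)|\ge t\big\}\le 2\exp\Big(-\frac{c\,t^2}{16\,K^2(\beta\log n)^{2/p}}\Big)+C\exp\Big(-\frac{c'\,t^p}{16^{p/2}K^p}\Big).$$
Since $t>C_pKL$ in this range, the prefactor $C$ is absorbed into the leading $2$ and the numerical constants are absorbed into the exponents, leaving a bound of the claimed form with $c_p$ the minimum of the finitely many resulting $p$-dependent constants.

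The step I expect to be the real obstacle is the tail estimate for $W=\sum_i X_i^2\mathbf 1_{\{|X_i|>R\}}$: one must show that for $u$ above the scale $K^2(\log n)^{2/p}$ this sum of (essentially) $\psi_{p/2}$ nonnegative summands is governed by its largest term, so that the union-bound factor $n$ is killed by the lower restriction on $u$. Everything else is Talagrand's inequality together with the routine bookkeeping of constants and the passage from $\Med f(Y)$ to $\Med f(X)$.
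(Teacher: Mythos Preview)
Your outline is correct and lands on the same result, but the organization differs from the paper's in a way worth noting. The paper does not truncate once; it introduces a whole dyadic family $Y^{(k)}_i:=X_i\mathbf 1_{\{|X_i|\le 2\cdot 2^k K(\log n)^{1/p}\}}$, telescopes
\[
\Prob\{|f(X)-\Med f(X)|\ge t\}\le \Prob\{|f(Y^{(1)})-\Med f(X)|\ge t/2\}+\sum_{k\ge 1}\Prob\{|f(Y^{(k+1)})-f(Y^{(k)})|\ge u_k t\},
\]
and for each $k$ bounds the $k$th increment by a Chernoff estimate on the number of coordinates with $|X_i|$ in the $k$th dyadic shell. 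Your version hides exactly this computation inside the single assertion $\Prob\{W\ge u\}\le C\exp(-c'u^{p/2}/K^p)$ for $u\ge CK^2(\log n)^{2/p}$.

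That assertion is true, but your justification ``the dominant contribution comes from a single large $|X_i|$, and the union-bound factor $n$ is absorbed'' is not enough by itself. The event $\{W\ge u\}$ can also occur because \emph{many} coordinates sit just above the threshold $R$, each contributing only $\approx R^2$; bounding this ``many moderate terms'' scenario is precisely what forces a dyadic decomposition of the range $(R,\infty)$ together with a Chernoff bound on the count in each shell --- i.e.\ the paper's computation, transplanted from $f$ to $W$. So the two arguments coincide at the technical core; the paper's layout simply makes the multi-scale step explicit rather than packaging it as a black-box tail bound for $W$. If you want to keep your single-truncation framing, you should either carry out that dyadic estimate for $W$ explicitly, or cite a Nagaev-type large deviation inequality for sums of independent nonnegative variables with Weibull tails of index $p/2<1$.
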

We were not able to locate the above theorem in the literature, and provide its proof for completeness.
Theorem~\ref{9871-5091-09} is obtained by a simple reduction to Talagrand's
inequality for bounded variables. We note here that the two-level tail behavior for functions of independent
variables is a common phenomenon within high-dimensional probability, starting with the classical Bernstein's inequality.
It can be informally justified by saying that while deviation of individual variables from the above theorem
are controlled by $\exp(-\Theta(t^p))$, linear combinations of variables of the form $\sum_{i=1}^n a_i X_i$
(with $\|a\|_\infty\ll \|a\|_2$) exhibit subgaussian behavior in a certain range.
Notice that, in the above statement, $2\exp\big(- c_p \,t^2/\big(K^2(\log n)^{2/p}\big)$ is the
dominating term on the right hand side when $\frac{t}{K} = O\big( (\log n )^{\frac{2}{p(2-p)}}\big)$.
Further, there is no concentration phenomenon when $\frac{t}{K} = O( (\log n)^{1/p})$.
For $t\gg K(\log n )^{\frac{2}{p(2-p)}}$, the tail is estimated by $O\big(\exp\big(-c_p\, t^p/K^p\big)\big)$.

It can be verified that the statement of Theorem~\ref{9871-5091-09} is optimal in the following sense (we only consider the range $p\in[1,2)$ here).
\begin{prop}\label{39-109480980-98}
For every $p\in[1,2)$ there is a $C_p>0$ depending only on $p$ with the following property.
Let $n\geq C_p$, $t>0$, and $K>0$. Then there exist a random vector $X=(X_1,X_2,\dots,X_n)$
of independent random variables
with $\|X_i\|_{\psi_p}\leq K$, $1\leq i\leq n$, and a convex $1$-Lipschitz function $f$ such that
$$
\Prob\big\{ f(X)-\Med\,f(X) \geq t\big\} \geq \tilde c\max \big( \exp\big(- \tilde C \, t^2/\big(K^2(\log n)^{2/p}\big),
\exp\big(
-\tilde C\,t^p/K^p\big) \big)
$$
and
$$
 \Prob\big\{ f(X)-\Med\,f(X) \leq -t\big\} \geq \tilde c\max \big( \exp\big(- \tilde C \, t^2/\big(K^2(\log n)^{2/p}\big),
\exp\big( -\tilde C\,t^p/K^p\big) \big). 
$$ 
Here, $\tilde c,\tilde C>0$ are universal constants.
\end{prop}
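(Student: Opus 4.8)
The plan is to split into two regimes according to which term of the maximum dominates. Comparing exponents, $\exp\big(-\tilde C\,t^2/(K^2(\log n)^{2/p})\big)\geq\exp\big(-\tilde C\,t^p/K^p\big)$ holds exactly when $t\leq K(\log n)^{2/(p(2-p))}$, with equality at the endpoint. Thus for $t\geq K(\log n)^{2/(p(2-p))}$ it suffices to exhibit $X$ and $f$ for which both one-sided deviations are at least $\tilde c\exp(-\tilde C\,t^p/K^p)$, and for $t<K(\log n)^{2/(p(2-p))}$ it suffices that both are at least $\tilde c\exp\big(-\tilde C\,t^2/(K^2(\log n)^{2/p})\big)$; the final universal constants are obtained by taking the worse of those produced in the two cases. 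Throughout, $X$ and $f$ may depend on $t,n,K$.

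For $t\geq K(\log n)^{2/(p(2-p))}$ I would use a single heavy-tailed coordinate. Let $Z$ be the symmetric real random variable with $\Prob\{Z>s\}=\frac12 e^{-2s^p}$ for $s\geq 0$; then $\Med Z=0$, and since $\Exp\exp(|Z|^p/\lambda^p)=1+\int_1^\infty u^{-2\lambda^p}\,du=1+\frac{1}{2\lambda^p-1}$ for $\lambda^p>\frac12$, one has $\|Z\|_{\psi_p}=1$. Set $X_1:=KZ$ and $X_2=\dots=X_n:=0$, so $\|X_i\|_{\psi_p}\leq K$ for every $i$, and take $f(x):=x_1$, which is convex and $1$--Lipschitz. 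Then $\Med f(X)=0$ and $\Prob\{f(X)\geq t\}=\Prob\{f(X)\leq -t\}=\frac12 e^{-2t^p/K^p}$, which is $\geq\tilde c\exp(-\tilde C\,t^p/K^p)$ provided $\tilde c\leq\frac12$ and $\tilde C\geq 2$. This regime presents no real difficulty.

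For $t<K(\log n)^{2/(p(2-p))}$ I would take $X_1,\dots,X_n$ i.i.d.\ two-valued, $\Prob\{X_i=a\}=q$ and $\Prob\{X_i=0\}=1-q$, with $q:=n^{-1/2}$ and $a:=K\big(\log(1+\sqrt n)\big)^{1/p}$; this makes $\Exp\exp(|X_i|^p/\lambda^p)=(1-q)+q\,e^{a^p/\lambda^p}=2$ precisely at $\lambda=K$, so $\|X_i\|_{\psi_p}=K$, and for $n$ large $a\asymp K(\log n)^{1/p}$ with $K^2(\log n)^{2/p}/a^2\in[1,2^{2/p}]\subseteq[1,4]$ (this uses $p<2$). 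Take $f(x):=\|x\|_2$, convex and $1$--Lipschitz. On the support of $X$ one has $f(X)=a\sqrt N$ with $N:=\#\{i:X_i=a\}\sim{\rm Bin}(n,q)$, so $\Med f(X)=a\sqrt{m_0}$ for a median $m_0$ of $N$, where $nq-1\leq m_0\leq nq+1$ and $nq=\sqrt n$. Since $t/a\leq\sqrt{m_0}$ in this regime, squaring $\sqrt N\geq\sqrt{m_0}+t/a$ (resp.\ $\sqrt N\leq\sqrt{m_0}-t/a$) and absorbing $t^2/a^2\leq\sqrt{m_0}\,t/a$ shows $\{f(X)-\Med f(X)\geq t\}$ contains $\{N\geq m_0+3\sqrt{m_0}\,t/a\}$ and $\{f(X)-\Med f(X)\leq -t\}$ contains $\{N\leq m_0-2\sqrt{m_0}\,t/a\}$. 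Because $t<K(\log n)^{2/(p(2-p))}$ while $a\sqrt{nq}\asymp K(\log n)^{1/p}n^{1/4}$, for $n\geq C_p$ the deviations $s\asymp\sqrt{nq}\,t/a$ lie in the Gaussian regime $0\leq s\lesssim nq$ of ${\rm Bin}(n,q)$ (here $nq=\sqrt n\to\infty$), so the standard two-sided lower bounds on binomial tails (provable via Stirling's formula) give $\Prob\{N\geq nq+s\}\geq c_1 e^{-C_1 s^2/(nq(1-q))}$ and $\Prob\{N\leq nq-s\}\geq c_1 e^{-C_1 s^2/(nq(1-q))}$ with universal $c_1,C_1$. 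Since $s^2/(nq(1-q))\asymp t^2/a^2$, both one-sided deviations of $f(X)$ are then at least $c\,e^{-C' t^2/a^2}\geq\tilde c\exp\big(-\tilde C\,t^2/(K^2(\log n)^{2/p})\big)$, using $a^2\geq K^2(\log n)^{2/p}/4$; all constants are universal. (For $t$ so small that $\sqrt{nq}\,t/a=O(1)$ the relevant probabilities are simply $\geq c_1/2>\tilde c$.)

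The one step requiring genuine thought is the choice of $q$ in the last construction. The naive choice $q=1/n$ makes the positive value $a$ of $X_i$ as large as the $\psi_p$-constraint permits, $\asymp K(\log n)^{1/p}$, which is the correct effective range; but then $N\sim{\rm Bin}(n,1/n)$ is $\mathrm{Poisson}(1)$-like, its upper tail is super-exponential past $O(1)$, so $\|x\|_2=a\sqrt N$ exhibits Gaussian fluctuations only for $t\lesssim K(\log n)^{1/p}$---far short of the required $t\lesssim K(\log n)^{2/(p(2-p))}$. Taking instead $q=n^{-1/2}$ keeps $a$ on the scale $K(\log n)^{1/p}$ (only the harmless factor $2^{2/p}\leq 4$ is lost, bounded since $p<2$) while forcing $N$ to concentrate, $nq=\sqrt n\to\infty$, so the Gaussian regime of $N$---hence of $\|x\|_2$---reaches well past $K(\log n)^{1/p}n^{1/4}$, which comfortably covers the required range once $n\geq C_p$. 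With the parameters fixed this way, everything else---checking $\|X_i\|_{\psi_p}=K$, passing the median through $f(X)=a\sqrt N$, turning the square-root nonlinearity into a binomial deviation, invoking the classical binomial tail lower bounds, and verifying that no constant depends on $n$ or $p$---is routine bookkeeping.
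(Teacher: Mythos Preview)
Your proposal is correct, and at the structural level it matches the paper: both split at $t\asymp K(\log n)^{2/(p(2-p))}$, use $f(x)=x_1$ with a $\psi_p$--tailed coordinate above the threshold, and use $f(x)=\|x\|_2$ with i.i.d.\ scaled Bernoulli components below it.

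The genuine difference is in the Bernoulli parameter. You take a \emph{fixed} $q=n^{-1/2}$, which keeps $a\asymp K(\log n)^{1/p}$ and makes $nq=\sqrt n$ large enough that the required deviations $s\asymp\sqrt{nq}\,t/a\lesssim(\log n)^{1/(2-p)}n^{1/4}$ stay in the Gaussian window $s\lesssim nq$; you then invoke two--sided binomial tail lower bounds. The paper instead chooses a $t$--dependent $\theta=\theta(t)$ so that $\theta n\asymp t^2/(K^2(\log(K^2n/t^2))^{2/p})$, which yields the finer intermediate estimate
\[
\Prob\big\{\|X\|_2-\Med\,\|X\|_2\geq t\big\}\;\gtrsim\;\exp\bigg(-\frac{C\,t^2}{K^2\big(\log(2+K^2n/t^2)\big)^{2/p}}\bigg)
\]
and, crucially, handles the lower tail not via a binomial lower--tail bound but via $\Prob\{\|X\|_2=0\}=(1-\theta)^n\gtrsim\exp(-2\theta n)$, which is exactly of the right order because $\theta$ was tuned to $t$. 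Your fixed--$q$ route is simpler and perfectly adequate for Proposition~\ref{39-109480980-98} with $p<2$; the paper's $t$--dependent choice is what lets the same construction also deliver Proposition~\ref{prop: exampleSubgaussian} (the case $p=2$), where the target exponent genuinely depends on $t$ through $\log(2+K^2n/t^2)$ and a single $q$ would not suffice across the whole range.
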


\medskip

Now, let $X=(X_1,\dots,X_n)$ be a vector of independent $K$-subgaussian random variables, that is, $\|X_i\|_{\psi_2}\leq K$, $1\leq i\leq n$.
It is elementary to see that $\|X\|_\infty:=\max\limits_{i\leq n}|X_i|=O(\sqrt{\log n})$ with probability, say, $1-n^{-10}$,
where the implicit constant in $O(\cdot)$ depends on $K$. By considering the vector of truncations
$\big(X_i\,{\bf 1}_{\{|X_i|\leq C\sqrt{\log n}\}}\big)_{i=1}^n$ (for an appropriate choice of $C$) and applying the
Talagrand convex distance inequality, it is easy to deduce that for every $1$-Lipschitz convex function $f$ in $\R^n$,
$$
\Var\,f(X_1,\dots,X_n)=O(\log n),
$$
where the implicit constant depends on $K$ only. \co{A more elaborate argument \cite[Lemma~1.8]{KlochkovZ} gives, with the above notation, the following variable-dependent estimate:
$$
\Prob\big\{|f(X)-\Med\,f(X)|\geq t\big\}\leq 2\exp\bigg(-\frac{ct^2}{\big\|\max\limits_{1\leq i\leq n}|X_i|\big\|_{\psi_2}^2}\bigg),\quad t>0
$$
(see also \cite{MendelsonT}). When bounding the right hand side as a function of $n$, $K$, and $t$ only, we get
$$
\Prob\big\{|f(X)-\Med\,f(X)|\geq t\big\}\leq 2\exp\bigg(-\frac{ct^2}{K^2\log n}\bigg),\quad t>0,
$$
which is not sharp for large $t$ as our main result below shows.}
\begin{theor}\label{1-3974-1948798}
There is a universal constant $c>0$ with the following property. Let $K>0$, $n\geq 2$, and let
$X=(X_1,X_2,\dots,X_n)$ be a vector of independent $K$-subgaussian random variables.
Then for every $1$-Lipschitz convex function $f$ in $\R^n$,
we have
$$
\max\big(\Prob\big\{f(X)-\Med\,f(X)\geq t\big\},\Prob\big\{f(X)-\Med\,f(X)\leq -t\big\}\big)\leq \exp\bigg(
-\frac{c\,t^2}{K^2\log\big(2+\frac{K^2 n}{t^2}\big)}\bigg),\quad t>0.
$$
\end{theor}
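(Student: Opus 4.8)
The plan is to truncate the coordinates at a scale $M\asymp\sqrt{\log(2+K^2n/t^2)}$ depending on the deviation level $t$, apply Talagrand's convex distance inequality to the (bounded) truncated vector, and control the truncation remainder by a direct exponential--moment estimate. By homogeneity we may assume $K=1$; write $L:=\log(2+n/t^2)$. Since $t\mapsto t^2/L$ is increasing and a deviation from the median has probability at most $1/2$, the bound is trivial on the range $\{t:\exp(-c\,t^2/L)\geq 1/2\}$, so it suffices to treat $t$ with $t^2/L>(\log 2)/c$; choosing the final constant $c$ small enough, this forces $t^2/L$ to exceed any prescribed absolute constant, which is exactly what makes the truncation below effective.

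Fix such a $t$, put $M:=\sqrt{\alpha L}$ for a large absolute constant $\alpha$, let $Y_i:=\max(-M,\min(M,X_i))$ and $Z:=X-Y$, so $|Z_i|=(|X_i|-M)_+$ and, by $1$--Lipschitzness, $|f(X)-f(Y)|\leq\|Z\|_2$. The restriction of $f$ to $[-M,M]^n$ is convex and $1$--Lipschitz and $Y$ has independent coordinates supported in an interval of length $2M$, so Talagrand's convex distance inequality in the bounded--support form recalled in the introduction (see \cite[Chapter~4]{Ledoux}) yields $\Prob\{|f(Y)-\Med f(Y)|\geq s\}\leq 2\exp(-c_2 s^2/M^2)$ for an absolute $c_2>0$ and all $s>0$; here one uses for the lower tail that $\{f\leq \Med f(Y)-s\}$ is convex. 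For the remainder, $\|X_i\|_{\psi_2}\leq 1$ gives $\Exp\exp(X_i^2)\leq 2$, hence $\Prob\{|X_i|>M\}\leq 2e^{-M^2}$ and $\Exp\exp(2\lambda X_i^2)\leq 2$ for $\lambda\leq\tfrac12$; combining this with $(|X_i|-M)^2\leq X_i^2$ on $\{|X_i|>M\}$ and Cauchy--Schwarz gives $\Exp\exp\big(\lambda(|X_i|-M)_+^2\big)\leq 1+2e^{-M^2/2}$ for $\lambda\leq\tfrac12$. By independence $\Exp\exp\big(\tfrac12\|Z\|_2^2\big)\leq\exp\big(2n e^{-M^2/2}\big)$, and since $n e^{-M^2/2}=t^2\cdot r(2+r)^{-\alpha/2}$ with $r=n/t^2$, taking $\alpha$ large makes $n e^{-M^2/2}\leq t^2/N$ for any prescribed $N$; Markov's inequality then gives $\Prob\{\|Z\|_2\geq\gamma t\}\leq\exp(-\gamma^2 t^2/4)$ for every fixed $\gamma\in(0,1]$.

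Because $t\gg M$ in the range under consideration, these two estimates first pin down the median: on the event $\{\,f(Y)\leq\Med f(Y)+s_0\,\}\cap\{\,\|Z\|_2\leq R\,\}$, which has probability $>1/2$ once $s_0\asymp M$ and $R$ is a small fixed multiple of $t$, one has $f(X)\leq\Med f(Y)+s_0+R\leq\Med f(Y)+t/4$; running the same argument with the inequalities reversed (and with $f(Y)\geq\Med f(Y)-s_0$, using convexity of the sublevel set again) gives $|\Med f(X)-\Med f(Y)|\leq t/4$. Consequently $\{f(X)-\Med f(X)\geq t\}\subseteq\{f(X)\geq\Med f(Y)+3t/4\}\subseteq\{f(Y)\geq\Med f(Y)+3t/8\}\cup\{\|Z\|_2\geq 3t/8\}$, whose probability is at most $2\exp\!\big(-9c_2 t^2/(64\alpha L)\big)+\exp(-9t^2/256)$; since $L\geq\log 3$, both summands are bounded by $\exp(-c\,t^2/L)$ after $c$ is chosen small, and absorbing the constant factor (again using that $t^2/L$ is large) gives the claimed bound. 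The lower tail $\{f(X)-\Med f(X)\leq -t\}$ is handled identically via $f(Y)\leq f(X)+\|Z\|_2$ and the convexity of the sublevel sets of $f$.

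The one real obstacle is the remainder $\|Z\|_2$: the naive route, bounding $\Prob\{Z\neq 0\}$ by a union bound over the coordinates, costs a factor of $n$ and is worthless precisely when $t\lesssim\sqrt n$ --- the regime in which the logarithm in the exponent is not a constant. The resolution is to measure the remainder in $\ell_2$ rather than through its support, and to observe that its full exponential moment is exponentially small in $M^2$; tuning $M^2\asymp L$ then balances $n e^{-M^2/2}$ against $t^2$. Everything else is a careful but routine juggling of the absolute constants $\alpha$, $c$, $s_0/M$, $\gamma$, and of the threshold separating the trivial from the non-trivial range, so that the inequalities above hold simultaneously.
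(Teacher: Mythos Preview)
Your argument is correct and takes a genuinely different route from the paper. The paper does \emph{not} truncate: it develops a modified convex distance $\dist^c$ and proves an exponential--moment inequality for it (Proposition~\ref{prop: choiceOfL}) by running Talagrand's induction on the dimension, with a two--case induction step (Lemma~\ref{1-9847109870}) that distinguishes whether most of the mass of the slice $A(\cdot)$ lies in $[-\tilde L,\tilde L]$. The theorem is then obtained from this moment bound by optimizing over an auxiliary parameter~$\delta$.

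Your proof instead reduces directly to the classical bounded--support Talagrand inequality via a single $t$--dependent clipping at level $M=\sqrt{\alpha\log(2+n/t^2)}$, and controls the remainder $\|Z\|_2$ through its full $\psi_1$--type exponential moment rather than through its support; the identity $n e^{-M^2/2}=t^2\cdot r(2+r)^{-\alpha/2}$ with $r=n/t^2$ is exactly what makes this balance. This is more elementary and shorter, and it is worth noting that the authors explicitly anticipated otherwise (``\dots appears to require additional arguments rather than the straightforward reduction to the case of bounded variables''): the point they may have had in mind is that a \emph{fixed} truncation level cannot work, whereas your level depends on~$t$. What the paper's approach buys in return is a stronger intermediate statement --- a uniform bound on $\Exp\exp\big(c\,(\dist^c(X,A))^2/(K^2\log(\cdot))\big)$ valid for arbitrary Borel $A$ --- from which the concentration for convex sets is just a corollary; your method yields the tail bound directly and for one $t$ at a time.
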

The estimate provided by the theorem is optimal in the following sense:
\begin{prop} \label{prop: exampleSubgaussian}
Let $K>0$, $n\geq \tilde C$, and $t>0$. Then there exist a vector $X=(X_1,X_2,\dots,X_n)$
of independent $K$-subgaussian random variables, and a convex $1$-Lipschitz function $f$
such that
$$
\Prob\big\{f(X)-\Med\,f(X)\geq t\big\}\geq \tilde c\exp\bigg(
-\frac{\tilde C\,t^2}{K^2\log\big(2+\frac{K^2n}{t^2}\big)}\bigg),
$$
and
$$
\Prob\big\{f(X)-\Med\,f(X)\leq -t\big\}\geq \tilde c\exp\bigg(
-\frac{\tilde C\,t^2}{K^2\log\big(2+\frac{K^2n}{t^2}\big)}\bigg).
$$
Here, $\tilde c,\tilde C>0$ are universal constants.
\end{prop}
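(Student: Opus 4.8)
The proof is by explicit construction, with the example depending on the position of $t$ relative to $\sqrt n$. First I would reduce to $K=1$: if a pair $(X,f)$ works for $K=1$ at parameter $t/K$, then $\bigl(KX,\ x\mapsto Kf(x/K)\bigr)$ has $K$--subgaussian coordinates, a convex $1$--Lipschitz function, and $\Prob\{Kf(X)-K\Med f(X)\ge t\}=\Prob\{f(X)-\Med f(X)\ge t/K\}$, which turns the $K=1$ bound into the desired one. After this reduction the target lower bound is $\tilde c\exp\!\bigl(-\tilde C t^2/\log(2+n/t^2)\bigr)$, and I fix universal constants $0<c_1<1$ (small) and $C_0\ge 1$ splitting the range of $t$ into $(0,C_0]$, $[C_0,c_1\sqrt n]$ and $[c_1\sqrt n,\infty)$; the first and last overlap as soon as $n\ge\tilde C$.

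For $t\le C_0$ or $t\ge c_1\sqrt n$ one Gaussian coordinate suffices: take $X_1=c_g g$ with $g$ standard normal and $c_g$ chosen so that $\|c_g g\|_{\psi_2}=1$, set $X_2=\dots=X_n=0$ and $f(x)=x_1$. Then $\Med f(X)=0$ and, by the elementary two--sided estimate $\Prob\{g\ge u\}\ge \tilde c_1\exp(-\tilde C_1 u^2)$ valid for all $u\ge 0$, each tail of $f(X)$ is at least $\tilde c_1\exp(-\tilde C_1 t^2/c_g^2)$. For $t\le C_0$ this is bounded below by a positive constant and hence exceeds the target (which is at most $\tilde c$); for $t\ge c_1\sqrt n$ one has $\log(2+n/t^2)\le\log(2+c_1^{-2})=:\Lambda_1$, so the bound is at least $\tilde c_1\exp\!\bigl(-\tilde C_1\Lambda_1 c_g^{-2}\,t^2/\log(2+n/t^2)\bigr)$, which dominates the target once $\tilde C\ge \tilde C_1\Lambda_1 c_g^{-2}$ and $\tilde c\le\tilde c_1$.

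The heart of the matter is the range $C_0\le t\le c_1\sqrt n$, where I would use a \emph{normalised sum of the largest coordinates}. Put $q:=t^2/n$, $a:=\sqrt{\log(1+1/q)}=\sqrt{\log(1+n/t^2)}$, and $k:=\lceil C_3 t^2\rceil$ for a fixed integer $C_3\ge 3$; let the $X_i$ be independent and identically distributed with $\Prob\{X_i=a\}=q$ and $\Prob\{X_i=0\}=1-q$. A one--line computation gives $\Exp\exp(X_i^2)=(1-q)+q(1+1/q)=2$, so $\|X_i\|_{\psi_2}=1$ exactly; note $q$ is small because $c_1$ is. Take $f(x)=k^{-1/2}\max_{|A|=k}\sum_{i\in A}x_i$, the normalised sum of the $k$ largest coordinates; it is a maximum of linear functions with gradients of norm $1$, hence convex and $1$--Lipschitz. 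Writing $S:=\#\{i:X_i=a\}\sim\mathrm{Bin}(n,q)$, the vector $X$ has $S$ coordinates equal to $a$ and the rest $0$, so $f(X)=a\,k^{-1/2}\min(S,k)$. Choosing $c_1$ small enough that $a^2=\log(1+n/t^2)$ is large relative to $C_3$, one checks (using $|\Med S-nq|\le 1$ and $nq=t^2$) that $\Med S\le k$ and $0\le \Med S\pm s\le k$ for $s:=t\sqrt k/a$; hence $\Med f(X)=a\,k^{-1/2}\Med S$ (since $u\mapsto\min(u,k)$ preserves medians not exceeding $k$) and the deviation events translate \emph{exactly} into one--sided events for $S$:
\[
\{f(X)-\Med f(X)\ge t\}=\{S\ge \Med S+s\},\qquad \{f(X)-\Med f(X)\le -t\}=\{S\le \Med S-s\}.
\]
Since $|\Med S-nq|\le 1$ and $s\le nq/4$, it remains to apply the elementary binomial deviation bounds $\Prob\{S\ge nq+r\}\ge c_0 e^{-C_0'r^2/(nq)}$ and $\Prob\{S\le nq-r\}\ge c_0 e^{-C_0'r^2/(nq)}$, valid for $0\le r\le nq/2$ and $nq\ge1$ (proved by summing $\asymp\sqrt{nq}$ consecutive point probabilities, each estimated via Stirling's formula, which eliminates any polynomial prefactor). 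With $s^2/(nq)=k/a^2$ and $a^2=\log(1+n/t^2)\asymp\log(2+n/t^2)$ this yields both tails $\ge \tilde c\exp(-\tilde C t^2/\log(2+n/t^2))$, completing the middle range; the constants from the three regimes are then merged into a single pair $\tilde c,\tilde C$.

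The main obstacle is the design of $(X,f)$ rather than the estimates: one needs a convex $1$--Lipschitz function whose value on a product of $1$--subgaussian variables behaves, \emph{at scale $t$}, like a centered Gaussian of variance $\asymp\log(2+n/t^2)$ instead of $\asymp1$, and the sum of the $k\asymp t^2$ largest coordinates of a vector with atoms at height $\asymp\sqrt{\log(n/t^2)}$ is precisely calibrated to produce this scale--dependent variance. The two technical points are then (i) the bookkeeping that makes the two displayed identities hold exactly --- controlling integer parts of $\Med S$, $k$, $s$ and ensuring $0\le\Med S\pm s\le k$, which is where the smallness of $c_1$ is used --- and (ii) the prefactor--free lower bound on the binomial tails. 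Matching the exponent to $\log(2+n/t^2)$ at the end relies only on $a^2=\log(1+n/t^2)$ and $\log(2+n/t^2)$ differing by at most a factor $2$ over the relevant range of $t$.
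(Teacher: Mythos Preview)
Your construction is correct and close in spirit to the paper's, but the concrete test function and the parameter calibration differ in a way worth recording. The paper also uses a two--point distribution $X_i\in\{0,\alpha\}$ with $\Prob\{X_i=\alpha\}=\theta$ and $\alpha=K\sqrt{\log(1/\theta)}$, but takes $f(x)=\|x\|_2$ rather than your normalised sum of the $k$ largest coordinates, so that $f(X)=\alpha\sqrt{S}$ with $S\sim\mathrm{Bin}(n,\theta)$. Because of the square root, the paper has to choose $\theta$ with a logarithmic correction, roughly $\theta^{-1}\asymp (K^2n/t^2)\log(K^2n/t^2)$, and then feed this into a Binomial upper--tail lower bound (their Lemma~3.2) that keeps track of the extra $\log\bigl(2+\frac{\theta n+r}{\theta n}\bigr)$ factor arising when $r\gg\theta n$. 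Your choice $f(x)=k^{-1/2}\max_{|A|=k}\sum_{i\in A}x_i$ linearises the dependence on $S$ in the relevant window $[0,k]$, so you can take the simpler $q=t^2/n$, land in the ``Gaussian'' regime $r\le nq/2$ of the Binomial tail, and avoid that extra logarithm altogether; this is a genuine simplification. For the \emph{lower} tail the paper does not use a Binomial left--tail estimate at all: it simply observes that $\Med\|X\|_2\ge t$ for its choice of $\theta$ and bounds $\Prob\{\|X\|_2\le\Med\|X\|_2-t\}$ below by $\Prob\{X=0\}=(1-\theta)^n\ge\exp(-2\theta n)$. Your symmetric treatment via $\Prob\{S\le nq-r\}\ge c_0e^{-C_0'r^2/(nq)}$ is equally valid and keeps the two tails parallel. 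Finally, for large $t$ the paper uses a single coordinate with density $\tfrac12\exp(-(t/\tilde K)^2)$--type tails rather than a Gaussian, but your Gaussian example works just as well there.
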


\medskip

The structure of the note is as follows.
In Section~\ref{-3194719847}, we provide a proof of Theorem~\ref{9871-5091-09}.
Section~\ref{1928471092847} is devoted to proving Propositions~\ref{39-109480980-98} and~\ref{prop: exampleSubgaussian}.
Finally, in Section~\ref{1876491874968} we consider the main result of the note, Theorem~\ref{1-3974-1948798}.

\section{Proof of Theorem~\ref{9871-5091-09}}\label{-3194719847}
%\begin{proof}[Proof of Theorem \ref{9871-5091-09}]
Fix $p\in \co{(0},2)$, $K>0$, a natural number $n\geq 2$, and a $1$-Lipschitz convex function $f$ in $\R^n$.
To prove the theorem, it is sufficient to verify a deviation inequality for the parameter $t\geq CK(\log n)^{1/p}$,
where $C>0$ is a large constant depending on $p$.
Let $X=(X_1,\dots,X_n)$ be a vector of independent variables with $\|X_i\|_{\psi_p}\leq K$, $1\leq i\leq n$.

For each number $k\geq 1$, denote
$$
Y_i^{(k)}:=X_i\,{\bf 1}_{\{|X_i|\leq  \,\co{2^{k-1}}K({\co 4}\log n)^{1/p}\}}.
$$
Further, let $m\geq 1$ be the largest integer such that
\begin{equation}\label{efnqpifjnefpqifnpin}
\frac{t}{\co{2^{m}}\,K(\co{4}\log n)^{1/p}}\geq 1,
\end{equation}
and define
\begin{equation}\label{akfjnapfijnpfeinqpifnpjn}
u_k:=\tilde c\,2^{-(2-p)|m-k|/4},\quad k\geq 1,
\end{equation}
where the constant $\tilde c=\tilde c(p)>0$ is defined via the relation
$$
\tilde c\,\sum_{k=1}^\infty 2^{-(2-p)|m-k|/4}=\frac{1}{2}.
$$
We start by writing
\begin{align*}
\Prob&\big\{|f(X)-\Med\,f(X)|\geq t\big\}\\
&\leq \Prob\big\{|f(Y_1^{(1)},\dots,Y_n^{(1)})-\Med\,f(X)|\geq t/2\big\}
+\sum_{k=1}^\infty \Prob\big\{|f(Y_1^{(k+1)},\dots,Y_n^{(k+1)})-f(Y_1^{(k)},\dots,Y_n^{(k)})|\geq u_k\, t\big\}.
\end{align*}
To estimate the probability $\Prob\big\{|f(Y_1^{(1)},\dots,Y_n^{(1)})-\Med\,f(X)|\geq t/2\big\}$,
we note that the diameter of the support of each $Y_i^{(1)}$
is at most \co{ $2\,K(4\log n)^{1/p}$}, and hence
applying Talagrand's convex distance inequality for bounded variables \eqref{apkejfnpfiunqpfijn}, we get
$$
\Prob\big\{|f(Y_1^{(1)},\dots,Y_n^{(1)})-\Med\,f(Y_1^{(1)},\dots,Y_n^{(1)})|\geq s\big\}
\leq 2\exp\bigg(-\frac{c s^2}{K^2(\co{4}\log n)^{2/p}} \bigg),\quad s>0,
$$
for a universal constant $c>0$.
On the other hand,
we observe that
\begin{align*}
\min&\big(\Prob\big\{f(Y_1^{(1)},\dots,Y_n^{(1)})\geq \Med\,f(X)\big\},
\Prob\big\{f(Y_1^{(1)},\dots,Y_n^{(1)})\leq \Med\,f(X)\big\}\big)\\
&\geq \frac{1}{2}-n\,\max\limits_{i\leq n}\Prob\big\{|X_i|\geq \co{K(4}\log n)^{1/p}\big\}
\geq \frac{1}{2}-\frac{2}{n^3}\geq \frac{1}{4},
\end{align*}
which, together with the last inequality, implies that
$$
\big|\Med f(Y_1^{(1)},\dots,Y_n^{(1)})-\Med\,f(X)\big|\leq
CK(\co{4}\log n)^{1/p}.
$$
Therefore,
$$
\Prob\big\{|f(Y_1^{(1)},\dots,Y_n^{(1)})-\Med\,f(X)|\geq t/2\big\}
\leq 2\exp\bigg(-\frac{c t^2}{K^2(\co{4}\log n)^{2/p}} \bigg),
$$
for some universal constant $c>0$.

Further, for every $k\geq 1$ we have
\begin{align*}
\Prob&\big\{|f(Y_1^{(k+1)},\dots,Y_n^{(k+1)})-f(Y_1^{(k)},\dots,Y_n^{(k)})|\geq u_k\, t\big\}\\
&\leq \Prob\big\{\big\|\big(Y_i^{(k+1)}-Y_i^{(k)}\big)_{i=1}^n\big\|_2\geq u_k\, t\big\}\\
&\leq\Prob\bigg\{\sum_{i=1}^n {\bf 1}_{\{Y_i^{(k+1)}-Y_i^{(k)}\neq 0\}}\geq
\max\Big(1,\frac{u_k^2\, t^2}{\co{2^{2k}}\,K^2 ( \co{4}\log n)^{2/p}}\Big)\bigg\},
\end{align*}

where ${\bf 1}_{\{Y_i^{(k+1)}-Y_i^{(k)}\neq 0\}}$, $1\leq i\leq n$, are independent
Bernoulli random variables with
$$\Prob\big\{Y_i^{(k+1)}-Y_i^{(k)}\neq 0\big\}\leq \Prob\big\{|X_i|\geq
2^{k-1}K(4\log n)^{1/p}\big\}\leq \frac{2}{\co{ \exp(2^{kp-p}4\log n)}}
\leq \frac{1}{n^3},\quad 1\leq i\leq n.$$

A standard estimate $s^{\lceil \tilde s\rceil}{n\choose \lceil \tilde s\rceil}
\leq \big(\frac{en s}{\tilde s}\big)^{\tilde s}$ valid for any $\tilde s\in[1,n]$
and $s\in(0,(en)^{-1}]$, then implies
\begin{align*}
\Prob&\big\{|f(Y_1^{(k+1)},\dots,Y_n^{(k+1)})-f(Y_1^{(k)},\dots,Y_n^{(k)})|\geq u_k\, t\big\}\\
&\leq
\bigg(\frac{2en}{\exp(\co{2^{kp-p}\cdot 4}\log n)\max(1,\frac{u_k^2\, t^2}{\co{2^{2k}K^2(4\log n)^{2/p}}})
}\bigg)^{\max\big(1,\frac{u_k^2\, t^2}{\co{ 2^{2k}K^2(4\log n)^{2/p}}}\big)}\\
&\leq \exp\bigg(-c\,\co{ 2^{kp-p}} (\log n)\,\max\Big(1,\frac{u_k^2\, t^2}{\co{2^{2k}K^2(4\log n)^{2/p}}}\Big)\bigg)
\end{align*}
for some universal constant $c>0$,
\co{where the last inequality follows since $2en\leq \exp((2+\log_2(e))\log n)$}.

For $k\leq m$, we use the inequality 
\begin{align*}
\frac{t^2}{2^{2k}K^2 (4\log n)^{2/p}} \ge 2^{2m-2k},
\end{align*}
which follows \co{from \eqref{efnqpifjnefpqifnpin}, to write
\begin{align*}
\exp\bigg(-c\,2^{kp-p}(\log n)\,\max\Big(1,\frac{u_k^2\, t^2}{2^{2k}K^2(4\log n)^{2/p}}\Big)\bigg)
&\leq
\exp\big(-c(\log n)\,2^{kp-p +2m-2k}u_k^2\big)\\
&= \exp\big( -c (\log n) 2^{(m-1)p} 2^{(2-p)(m-k)}u_k^2\big)\\
&= \exp\big( -c \tilde c^2 2^{(m-1)p}\cdot (\log n)2^{(2-p)(m-k)/2}\big),
\end{align*}
where the last equality follows from \eqref{akfjnapfijnpfeinqpifnpjn}.
}
Using the definition of $m$ and assuming the constant $C$ in the assumption for $t$ is sufficiently large,
we get
\begin{align*}
\sum_{k\leq m}\Prob\big\{|f(Y_1^{(k+1)},\dots,Y_n^{(k+1)})-f(Y_1^{(k)},\dots,Y_n^{(k)})|\geq u_k\, t\big\}
&\leq\co{\sum_{k\leq m}
\exp\big(-c\,\tilde c^2\,\,2^{(m-1)p}\cdot (\log n)\;\,2^{(2-p)(m-k)/2}\big),}\\
&\leq\exp\big(-c'''\cdot (\log n)
\;\,2^{mp}\big)
\leq 
\exp\bigg(-\frac{\hat c\,t^p}{K^p}\bigg)
\end{align*}
for some $c''',\hat c>0$ depending only on $p$.

For $k>m$, we simply write
\begin{align*}
\Prob\big\{|f(Y_1^{(k+1)},\dots,Y_n^{(k+1)})-f(Y_1^{(k)},\dots,Y_n^{(k)})|\geq u_k\, t\big\}
\leq\exp\big(-c\,\co{2^{(m-1)p}\cdot 2^{(k-m)p}}\log n\big),
\end{align*}
and essentially repeating the above computations, get
\begin{align*}
\sum_{k>m}\Prob\big\{|f(Y_1^{(k+1)},\dots,Y_n^{(k+1)})-f(Y_1^{(k)},\dots,Y_n^{(k)})|\geq u_k\, t\big\}
\leq
\exp\bigg(-\frac{c''\,t^p}{K^p}\bigg)
\end{align*}
for some $c''>0$ depending only on $p$.

The result follows.
$\hfill \square $

%\end{proof}

\section{Proof of Propositions~\ref{39-109480980-98} and~\ref{prop: exampleSubgaussian}}\label{1928471092847}

First, consider the following basic example.
Let $p\in[1,2]$, $\tilde K>0$, and let $\mu$ be the probability measure on $\R$ defined via the relation
$$ \mu([t,\infty)) = \mu((-\infty, -t]) = \frac{1}{2}\exp\big(-(t/\tilde K)^p\big),\quad t\geq 0.$$
It is easy to see that, with the random vector $X$ in $\R^n$ distributed according to $\mu^{\times n}$,
the components of $X$ have $\|\cdot\|_{\psi_p}$-norms bounded by $O(\tilde K)$ (with the absolute implicit constant).
On the other hand, with the function $f:\R^n\to\R$ given by
$$
f(x_1,x_2,\dots,x_n):=x_1,\quad (x_1,x_2,\dots,x_n)\in\R^n,
$$
we have
$$
\Prob\{ f(X) \le -t \}= \Prob \{ f(X) \ge t \} = \frac{1}{2} \exp\big(-(t/\tilde K)^p\big),\, t >0,
$$
which gives the required estimates for $t\geq \tilde K(\log n )^{\frac{2}{p(2-p)}}$ in the statement of Proposition \ref{39-109480980-98},
and for $t \ge \tilde K\sqrt{n}$ in Proposition \ref{prop: exampleSubgaussian}.

\medskip

The main statement of this section is the following proposition.
 \begin{prop} \label{prop: example}
    There exists a universal constant $C>1$ so that the following holds:
    Let $n\geq C$, $p \in [1,2]$, $K>0$.  
    Further, let $0\le t \le \frac{K}{C}\sqrt{n}$. Then there exists a random vector
    $X=(X_1,\dots,X_n)$ with i.i.d components whose $\|\cdot\|_{\psi_p}$-norm is bounded above by $K$ such that 
   \begin{align*} 
   \Prob\big\{\|X\|_2-\Med\,\|X\|_2\geq t\big\}&\geq \frac{1}{C} \exp\bigg(
- C\cdot \frac{t^2}{K^2 \big(\log\big(2+\frac{ K^2n }{ t^2 }\big) \big)^{2/p} }\bigg), \quad\mbox{ and } \\
 \Prob\big\{\|X\|_2-\Med\,\|X\|_2\leq -t\big\} &\geq \frac{1}{C} \exp\bigg(
- C\cdot \frac{t^2}{K^2 \big(\log\big(2+\frac{K^2 n}{t^2 }\big) \big)^{2/p} }\bigg).
    \end{align*}
 \end{prop}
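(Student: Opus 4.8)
The plan is to prove Proposition~\ref{prop: example} by taking $f(x):=\|x\|_2$ (which is convex and $1$--Lipschitz) and choosing the coordinates of $X$ to be sparse two--valued random variables, so that $\|X\|_2$ becomes an explicit increasing function of a single binomial count; the deviations of $\|X\|_2$ then reduce to binomial tail estimates. Concretely, I would set
$$
q:=\max\Big\{\frac{t^2}{K^2n},\ \frac2n\Big\},\qquad \mu:=nq=\max\Big\{\frac{t^2}{K^2},\,2\Big\},\qquad a:=K\big(\log(1+1/q)\big)^{1/p},
$$
and let $X=(X_1,\dots,X_n)$ have i.i.d. coordinates with $\Prob\{X_i=a\}=q$ and $\Prob\{X_i=0\}=1-q$. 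The value $a$ is calibrated so that $\Exp\exp(|X_i|^p/K^p)=1-q+q(1+1/q)=2$, that is, $\|X_i\|_{\psi_p}=K$. The first, routine, observations are that $n\ge C$ and $t\le\frac KC\sqrt n$ force $q\le\frac12$ and $1/q\ge C^2$, so that $L:=\log(1+1/q)$ is bounded below by $\log(1+C^2)$ and hence can be made as large as we wish by enlarging $C$; moreover $L\asymp\log(2+K^2n/t^2)$ and $a=KL^{1/p}\ge K$.

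Second, I would pass to the binomial picture. Writing $Z:=\#\{i\le n:X_i=a\}\sim\mathrm{Bin}(n,q)$ and $m:=\Med(Z)$, we have $\|X\|_2=a\sqrt Z$, and since $x\mapsto a\sqrt x$ is an increasing bijection of $[0,\infty)$, $\Med\,\|X\|_2=a\sqrt m$; the classical fact $m\in\{\lfloor\mu\rfloor,\lceil\mu\rceil\}$ gives $m=\mu+O(1)$. Then
$$
\Prob\big\{\|X\|_2-\Med\,\|X\|_2\ge t\big\}=\Prob\Big\{Z\ge\big(\sqrt m+t/a\big)^2\Big\}\ \ge\ \Prob\big\{Z\ge\mu+s\big\},\qquad s:=2\sqrt m\,\tfrac ta+\tfrac{t^2}{a^2}+O(1),
$$
and symmetrically $\Prob\{\|X\|_2-\Med\,\|X\|_2\le-t\}\ge\Prob\{Z\le\mu-s'\}$ with $s':=2\sqrt m\,\tfrac ta-\tfrac{t^2}{a^2}+O(1)$, which is positive since $t/a=(t/K)/L^{1/p}\le\sqrt\mu/L^{1/p}\le\sqrt m$. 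Using that $L$ is large I would check that $1\le s,s'\le\mu/2$ whenever $\mu\ge\mu_0$ for a suitable absolute constant $\mu_0$, and that, up to an additive $O(1)$, both $s$ and $s'$ are of order $\mu/L^{1/p}$; in particular $s^2/\mu$ and $(s')^2/\mu$ are at most $O\big(1+\mu/L^{2/p}\big)=O\big(1+t^2/(K^2(\log(2+K^2n/t^2))^{2/p})\big)$.

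The crux is a two--sided binomial tail lower bound in the moderate--deviation range: for $\mu\ge\mu_0$ and $1\le u\le\mu/2$, both $\Prob\{Z\ge\mu+u\}$ and $\Prob\{Z\le\mu-u\}$ are at least $c_1\exp(-C_1u^2/\mu)$. I would obtain this from Stirling's formula, which gives $\Prob\{Z=j\}\ge\frac{c}{\sqrt\mu}\exp(-C(j-\mu)^2/\mu)$ for integers $j$ with $|j-\mu|\le\mu/2$ (the condition $q\le\frac12$ keeping $j$ away from $0$ and $n$), and then \emph{summing} this over the $\asymp\min(\sqrt\mu,\mu/u)$ consecutive integers in $[\mu+u,\,\mu+u+\min(\sqrt\mu,\mu/u)]$ — an interval on which $(j-\mu)^2/\mu$ changes only by $O(1+u^2/\mu)$, so the summands stay within a constant factor of the first — which absorbs the polynomial factor $\min(1,\sqrt\mu/u)$ into the exponent (this factor matters only when $u\ge\sqrt\mu$, i.e.\ when $u^2/\mu\ge1$). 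I expect this step to be the main obstacle: the naive bound $\Prob\{Z\ge\mu+u\}\ge\Prob\{Z=\lceil\mu+u\rceil\}$ loses a $\sqrt\mu$ factor and is too weak precisely in the range $\sqrt\mu\lesssim u\lesssim\mu$ that governs the regime $K\lesssim t\lesssim K\sqrt n$ of interest.

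Finally I would assemble the estimates. For $t\ge\sqrt{\mu_0}\,K$ we have $\mu=t^2/K^2\ge\mu_0$, so applying the binomial bound with $u=s$ (respectively $u=s'$) together with the bound on $s^2/\mu$ from the second step yields $\Prob\{\|X\|_2-\Med\,\|X\|_2\ge t\}\ge c\exp(-C't^2/(K^2(\log(2+K^2n/t^2))^{2/p}))$, and likewise for the left tail. For $t<\sqrt{\mu_0}\,K$ the right--hand side of the asserted inequality is bounded below by a universal constant, while $\mu$ is a bounded quantity and $2\sqrt m\,t/a+t^2/a^2<1$ for $n$ large, so $\{\|X\|_2-\Med\,\|X\|_2\ge t\}\supseteq\{Z\ge m+1\}$ and $\{\|X\|_2-\Med\,\|X\|_2\le-t\}\supseteq\{Z\le m-1\}$, events whose probabilities are bounded below by a universal constant (using that $Z$ is within total variation $o(1)$ of $\mathrm{Pois}(\mu)$ and that $\Prob\{Z\le m-1\}\ge(1-q)^n\ge e^{-2\mu}$). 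Choosing the constant $C$ of the statement larger than all the implicit thresholds, than $C'$, and than $1/c$ then completes the proof.
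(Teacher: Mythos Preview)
Your approach is correct and uses the same test function $f(x)=\|x\|_2$ and the same family of two--point test distributions as the paper (i.i.d.\ coordinates with $\Prob\{X_i=\alpha\}=\theta$ and $\alpha$ calibrated so that $\|X_i\|_{\psi_p}\le K$); your binomial lower bound via Stirling plus summation over $\asymp\min(\sqrt\mu,\mu/u)$ consecutive integers is essentially Case~2 of the paper's Lemma~\ref{prop: BinomialTail}. The differences lie in the parameter choice and the lower--tail argument. You take $q=t^2/(K^2n)$, i.e.\ $\mu=nq=t^2/K^2$; the paper instead takes the smaller $\theta n\asymp t^2/\big(K^2L^{2/p}\big)$ with $L=\log(K^2n/t^2)$, which forces $\Med\|X\|_2\asymp t$. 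With your choice the binomial deviation satisfies $s\asymp\mu/L^{1/p}\ll\mu$, so you remain in the Gaussian regime throughout and never need the large--deviation case (with its extra logarithmic factor) of the binomial lemma. The trade--off is in the lower tail: since the paper's median is $\asymp t$, it gets the lower tail in one line from $\Prob\{\|X\|_2=0\}=(1-\theta)^n\ge\exp(-2\theta n)$, whereas you must prove and invoke a two--sided moderate--deviation bound for the binomial.

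One small gap: the claim that $s\ge 1$ once $\mu\ge\mu_0$ is not correct as stated, since $s\asymp\mu/L^{1/p}$ can drop below $1$ when $t$ barely exceeds $\sqrt{\mu_0}\,K$ while $n$ is very large. This is harmless---in that range both $\Prob\{Z\ge\lceil\mu\rceil+1\}$ and the right--hand side of the proposition are bounded below by universal constants---but you should say so rather than asserting $s\ge1$.
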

 Together with the above example, Proposition~\ref{prop: example} implies 
 Propositions~\ref{39-109480980-98} and ~\ref{prop: exampleSubgaussian}.
The ``test'' distribution we use to prove Proposition~\ref{prop: example}
is the $n$-fold product of a $2$-point probability measure defined by $\mu(\{0\})=1-\theta$ and $ \mu( \{K\log(1/\theta)^{1/p}\} ) = \theta$ where $\theta=\theta(t)$ is an appropriately chosen parameter. 

\medskip

The proof of the proposition relies on a precise lower bound for the tail probability of a Binomial random variable. We need the following result: 
 \begin{lemma} \label{prop: BinomialTail} 
  There exists universal constants \co{$c_{b}\in(0,1)$} and $C_{b}>1$ so that the following holds. 
  Let $n$ be a sufficiently large integer. 
  For $\theta\in\left[\frac{1}{c_{b}\,n},c_{b}\right]$, let $Y_{1},\dots,Y_{n}$
  be i.i.d Bernoulli random variables with a parameter $\theta>0$. Then, for any $0\le r\le n-\theta n$, 
 we have
  \begin{align} \label{eq: BinomialTail}  
 \mathbb{P}\Big\{ \sum_{i=1}^{n}Y_{i}\ge \theta n+r\Big\} 
 \ge \frac{1}{C_{b}}\exp\left(- C_{b}\log\left(2+\frac{\theta n+r}{\theta n}\right)\frac{r^{2}}{\theta n+r}\right).
  \end{align}
  \end{lemma}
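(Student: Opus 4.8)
\emph{Setup.} Write $S_n=\sum_{i=1}^n Y_i\sim\mathrm{Bin}(n,\theta)$, $\lambda:=\theta n$, and let $k:=\lceil\theta n+r\rceil$, so that $\{S_n\ge\theta n+r\}=\{S_n\ge k\}$ and $\lambda\le k\le n$. Using $\theta\in[\frac1{c_b n},c_b]$ one checks that replacing $\theta n+r$ by the integer $k$ distorts the exponent in \eqref{eq: BinomialTail} by only a bounded multiplicative factor and an additive $O(1)$; hence it suffices to produce a universal $C$ with $\mathbb{P}\{S_n\ge k\}\ge\frac1C\exp(-CE)$, where, writing $r:=k-\lambda$ from now on,
$$
E:=\log\Big(2+\tfrac k\lambda\Big)\,\frac{r^2}{k},\qquad D(\alpha\|\theta):=\alpha\log\tfrac\alpha\theta+(1-\alpha)\log\tfrac{1-\alpha}{1-\theta}.
$$

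\emph{Step 1 (relative entropy).} I would first prove the deterministic bound $nD(\tfrac kn\|\theta)\le C_1E$. Starting from $nD(\tfrac kn\|\theta)=k\log\frac k\lambda+(n-k)\log\frac{n-k}{n-\lambda}$, the inequality $\log(1-x)\le -x$ bounds the last summand by $-r+\frac{r^2}{n-\lambda}\le -r+\frac{2r^2}{n}$ (as $\lambda\le n/2$). If $r\le\lambda$, then $(1+a)\log(1+a)\le a+\frac{a^2}2$ with $a=r/\lambda$ gives $nD\le\frac{r^2}{2\lambda}+\frac{2r^2}n\lesssim\frac{r^2}\lambda\asymp\frac{r^2}k\le\frac{E}{\log2}$. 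If $r>\lambda$, then $k<2r$, the term $-r$ absorbs $\frac{2r^2}n\le 2r$, and $nD\le k\log\frac k\lambda\le k\log(2+\tfrac k\lambda)\le\frac{4r^2}k\log(2+\tfrac k\lambda)=4E$. This is the heart of the matter and the step I expect to cost the most thought: it is exactly here that the logarithm in \eqref{eq: BinomialTail} is forced, being inert in the ``Gaussian'' range $r\lesssim\lambda$ (where $D\approx r^2/(2\theta n^2)$) and genuinely present in the ``large-deviation'' range $r\gtrsim\lambda$ (where $D\approx\frac kn\log\frac k\lambda$).

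\emph{Step 2 (extreme tail $n-k\le\sqrt n$).} Here $r\ge n/4$ and $\frac k\lambda\ge\frac1{2c_b}$, so $E\gtrsim n\gg\log n$; then the crude one-term bound $\mathbb{P}\{S_n\ge k\}\ge\mathbb{P}\{S_n=k\}\ge\frac c{\sqrt n}e^{-nD(k/n\|\theta)}$ (Stirling for $\binom nk$; the case $k=n$ is immediate) combined with Step 1 gives $\mathbb{P}\{S_n\ge k\}\ge\frac c{\sqrt n}e^{-C_1E}\ge\frac1Ce^{-CE}$ once $n$ is large.

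\emph{Step 3 (the bulk, $n-k>\sqrt n$).} A one-term bound would lose a factor $\asymp\sqrt n$, so I would use exponential tilting. Tilt the parameter to $\theta':=k/n$, so $\mathbb{E}_{\theta'}S_n=k\in\Z$; since a binomial with integer mean has that mean as a median, $\mathbb{P}_{\theta'}\{S_n\ge k\}\ge\frac12$, and Chebyshev gives $\mathbb{P}_{\theta'}\{k\le S_n\le k+L\}\ge\frac14$ for $L:=\lceil2\tau\rceil$, $\tau:=\sqrt{k(n-k)/n}$ (the window sits inside $[0,n]$ precisely because $n-k>\sqrt n$ is large). Expressing $\mathbb{P}_\theta\{k\le S_n\le k+L\}$ as a sum of $\mathbb{P}_{\theta'}$-masses times the likelihood ratio $(\theta/\theta')^j((1-\theta)/(1-\theta'))^{n-j}$, which is monotone in $j$ and hence at least its value at $j=k+L$, yields
$$
\mathbb{P}_\theta\{S_n\ge k\}\ \ge\ \tfrac14\,e^{-nD(k/n\|\theta)}\,\rho_0^{\,L},\qquad \rho_0:=\frac{\lambda(n-k)}{k(n-\lambda)}\le1.
$$
It then remains to verify $nD+L\log(1/\rho_0)\le C_2(E+1)$: the first term is $\le C_1E$ by Step 1, and for the second, using $\log(1/\rho_0)=\log(1+\tfrac r\lambda)+\log(1+\tfrac r{n-k})$ and $L\le4\tau\le4\sqrt k$, one bounds $4\sqrt k\log(1+\tfrac r\lambda)\lesssim E+1$ (via $\tfrac r{\sqrt\lambda}\le\tfrac12(1+\tfrac{r^2}\lambda)$ when $r\le\lambda$, and $\sqrt k\le k$ together with Step 1 when $r>\lambda$) and $4\tau\log(1+\tfrac r{n-k})\le\tfrac{4r}{\sqrt{n-k}}\lesssim\tfrac{r^2}k+1$ (the only delicate case, $r<k/\sqrt{n-k}$, forces $n-k\ge n/4$ once $n$ is large, so $\tfrac{4r}{\sqrt{n-k}}<\tfrac{4k}{n-k}\le16$). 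This gives $\mathbb{P}_\theta\{S_n\ge k\}\ge\frac14e^{-C_2(E+1)}\ge\frac1Ce^{-CE}$ for an appropriate universal $c_b$ (small) and $C$ (large), which completes the proof. Apart from Step 1, the main obstacle is precisely this bookkeeping: the window must be long enough ($\asymp$ the standard deviation) to kill the Stirling prefactor, yet short enough that the likelihood-ratio correction $\rho_0^L$ costs only $e^{O(E)}$ — and the extreme tail has to be peeled off separately because there any such window would overrun $n$.
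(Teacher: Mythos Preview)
Your proof is correct and takes a genuinely different route from the paper's.

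The paper proceeds by elementary manipulation: it splits according to whether $r\gtrless\frac{1}{10}\theta n$. For large $r$ it uses the crude bound $\binom{n}{\theta n+r}\ge(n/(\theta n+r))^{\theta n+r}$ and lower-bounds the tail by a single point mass $P_{\theta n+r}$. For small $r$ it applies Stirling to obtain $P_{\theta n+r}\ge\frac{c}{\sqrt{\theta n+r}}\exp(-3r^2/(\theta n+r))$, and then eliminates the troublesome $1/\sqrt{\theta n+r}$ prefactor by summing a geometric-like block of consecutive point masses, estimating the ratios $P_{\theta n+r'+1}/P_{\theta n+r'}$ directly. No change of measure, no relative entropy, no tilting --- just binomial-coefficient algebra.

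Your argument is the measure-tilting (Bahadur--Rao style) approach: you identify the governing quantity as $nD(k/n\|\theta)$, prove in Step~1 the sharp two-regime bound $nD\lesssim E$ (which is where the logarithm in the statement is really explained), and then in Step~3 tilt to the mean-$k$ binomial, grab mass $\ge\frac14$ in a window of width $\asymp\tau$ via the integer-mean median fact and Chebyshev, and pay only $e^{-nD}\rho_0^L$ to convert back. The bookkeeping showing $L\log(1/\rho_0)\lesssim E+1$ is the analogue of the paper's ratio-summing step. Step~2 peels off the extreme tail where the window would overrun $[0,n]$; the paper has no such case because its geometric sum is only needed when $r<\theta n/10$, far from the boundary.

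What each approach buys: the paper's argument is entirely self-contained and avoids any appeal to tilting or KL divergence, at the cost of being more ad hoc. Your approach is more conceptual --- the role of $D(\cdot\|\cdot)$ and the two regimes $r\lessgtr\lambda$ are transparent --- and it is the argument that would generalize most readily (e.g.\ to other exponential families). Both are standard; neither is clearly shorter once all details are filled in.
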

    \begin{rem}
  The term $\frac{r^{2}}{\theta n+r}$ corresponds to the usual Bernstein-type
  tail estimate, and $\log\big(2+\frac{\theta n+r}{\theta n}\big)$ is the ``extra'' factor
  emerging when $\theta n=o\left(r\right)$.
  \end{rem}
Although the above statement is based on completely standard calculations, we provide its proof for completeness.
    \begin{proof}[Proof of Lemma~\ref{prop: BinomialTail}]
  We will assume that $ \sqrt{\theta n}$ (and  $\theta n$) is greater 
  than a sufficiently large universal constant and at the same time $ \theta$ is
  smaller than another small universal constant. Those conditions on $\theta$ can 
  be imposed by adjusting the constant $c_{b}$ in the statement of the lemma. 
  For every $k\leq n$, let $P_{k}:=\mathbb{P}\left\{ \sum_{i=1}^{n}Y_{i}=k\right\} $ and
  $P_{\ge k}:=\mathbb{P}\left\{ \sum_{i=1}^{n}Y_{i}\ge k\right\} $.

  We claim that in order to prove the lemma it is sufficient to establish the following inequalities:
  \begin{align} \label{eq: sufficientTail}
      \forall \; 0 \le r \le n- \theta n
      \mbox{ with }  \theta n + r \in \mathbb{N},
      \quad
      P_{\geq \theta n + r} \ge \begin{cases}
        \frac{1}{\bar{C}}
        \exp \Big( - \bar{C} 
            \log \Big( 2+ \frac{r}{\theta n}\Big)
        r \Big) &  \mbox{ if } r \ge \frac{1}{10} \theta n, \\
        \frac{1}{\tilde{C}}
        \exp \Big(  -\tilde{C} 
        \frac{r^2}{\theta n +r}
          \Big)  & \mbox{ if } 0 \le r < \frac{1}{10} \theta n,
      \end{cases}
  \end{align}
  for some universal constants $ \tilde{C}, \bar{C} >1$.
  
  To verify the claim, fix any $\theta$ (satisfying assumptions from the beginning of the proof)
  and any $r$ with $0< r\leq n- \theta n$.
  We have 
  $P_{\ge \theta n +r} = P_{\ge \lceil \theta n + r \rceil }$. 
  
  First, consider the case $\lceil \theta n + r \rceil-\theta n \ge \frac{1}{10}\theta n$.
  Since $\theta n$ is greater than a large universal constant, we have
  $ \lceil \theta n + r\rceil \le \theta n + 2r$, whence, applying \eqref{eq: sufficientTail} with parameters
  $\theta$ and $\lceil \theta n + r \rceil-\theta n$,
  $$ P_{ \ge \lceil \theta n + r\rceil } \ge 
    \frac{1}{\bar{C}} 
    \exp\Big( - \bar{C}\log\Big(2+ \frac{2r}{\theta n} \Big) 
    \cdot 2r \Big) 
  \ge 
  \frac{1}{\bar{C}} \exp\Big( 
    - 4\bar{C}\log\Big(2+ \frac{r}{\theta n}\Big) r
  \Big),$$
  where the last inequality holds since $ \log(2+2x) \le \log \big( (2+x)^2 \big)
  = 2\log(2+x)$ for $x\ge 0$. Further, under the condition $\lceil \theta n + r \rceil-\theta n \ge \frac{\theta n}{10}$ and assuming that $\theta n$ is larger than a big universal constant,  
  we have $ \frac{ 12 r}{\theta n +r} \ge 1$. Therefore, 
  \begin{align*} %\label{214302141}
    P_{ \ge \lceil \theta n + r\rceil } \ge 
    \frac{1}{\bar{C}} 
    \exp\Big( - 4\bar{C}\log\Big(2+ \frac{r}{\theta n} \Big) 
    r \Big) 
  \ge 
  \frac{1}{\bar{C}} \exp\Big( 
    - 4\cdot 12\bar{C}\log\Big( 2+\frac{\theta n+r}{\theta n}\Big)\frac{r^2}{\theta n +r}\Big) .
  \end{align*}
  
  Next, consider the case $ 0 < r$, $\lceil \theta n + r \rceil-\theta n<\frac{\theta n}{10}$. Clearly, $ \lceil \theta n + r \rceil - \theta n \le r+1$, and hence
   $$ P_{ \ge \lceil \theta n + r\rceil } \ge 
    \frac{1}{\tilde{C}} 
    \exp\Big( - \tilde{C} \frac{(r+1)^2}{\theta n + r}\Big) 
    \ge
    \frac{1}{\tilde{C}} 
     \exp\Big(  - \tilde{C} \frac{r^2}{\theta n +r} - \tilde{C}        
     \Big),
  $$
  where the last inequality holds since $ r \le \frac{ \theta n }{10}$ and $\theta n$
  is sufficiently large. As $ \log \Big( 2 + \frac{r}{\theta n}\Big) \ge \log(2)$, we obtain
  \begin{align*} %\label{230429s}
      P_{ \ge \lceil \theta n + r\rceil }
  \ge 
    \frac{1}{\tilde{C}} \exp(-\tilde{C})
    \exp\Big( - \frac{\tilde{C}}{\log(2)} 
      \log \Big(2+ \frac{r}{\theta n}\Big) 
     \frac{r^2}{\theta n + r}\Big),
  \end{align*}
  and derivation of \eqref{eq: BinomialTail} from \eqref{eq: sufficientTail} is complete.

\medskip

%  Now, inequality \eqref{eq: BinomialTail} follows from 
%  By \eqref{214302141} and \eqref{230429s} with the choice of $C_{b}$ 
%  which depends only on $\bar{C}$ and $\tilde{C}$. 

  From now on, we assume \co{$0 \le r \le n-\theta n$} and $ \theta n + r \in \mathbb{N}$. 
  Obviously,
  \begin{align} \label{eq: PthetanrFormula}
  P_{\theta n+r}={n \choose \theta n+r} \theta ^{\theta n+r}
  \left(1-\theta \right)^{n-\theta n-r}. 
  \end{align}

  {\bf Case 1:} $ \frac{ \theta n }{10} \le r \le n-\theta n$.
  
  By the standard estimate,
  $  { n \choose \theta n+r } \ge  \big(\frac{n}{\theta n + r}\big)^{\theta n +r}$, and so
  \[P_{\theta n+r}  \ge  \bigg(\frac{\theta n}{\theta n + r}\bigg)^{\theta n+r} ( 1 - \theta)^{n -\theta n -r}
  = \exp \bigg( - \log \bigg( \frac{\theta n + r}{\theta n} \bigg) (\theta n + r) \bigg)  (1-\theta)^{n -\theta n - r}.
 \]
 Since $(1-\theta) \ge \exp(-2\theta)$ whenever $\theta>0$ is small enough, we get
  \[
    (1-\theta)^{n-\theta n -r} \ge (1-\theta)^n \ge \exp(-2\theta n),
  \]
  and therefore 
  \[
    P_{\ge \theta n +r} \ge 
    P_{\theta n + r} \ge 
    \exp \bigg( - \log \bigg( \frac{\theta n + r}{\theta n} \bigg) (\theta n + r) -2 \theta n \bigg)  
    \ge \exp \bigg( - C \log \bigg( \frac{\theta n + r}{\theta n} \bigg) r \bigg)  
  \]
  for a universal constant $C>1$.   
  This completes the proof of \eqref{eq: sufficientTail} in the regime $ r \ge \frac{\theta n}{10}$. 
  
  {\bf Case 2: } $ 0 \le r < \frac{\theta n}{10}$.
  
  %Here we will need a better bound for the binomial coefficient ${ n \choose \theta n + r}$.
  In view of Stirling's formula, %$k! = (1+o_k(1)) \sqrt{2\pi k} (k/e)^k$, and so
  \begin{align*}
  P_{ \theta n+r}\ge & c\,\sqrt{\frac{n}{\left(\theta n+r\right)\left(n-\theta n-r\right)}}
  \left(\frac{ n}{\theta n+r}\right)^{\theta n+r}\left(\frac{n}{n-\theta n-r}\right)^{n-\theta n-r}
  \theta^{\theta n +r} (1-\theta)^{n-\theta n - r} \\
  \ge & 
  \frac{ c}{\sqrt{\theta n+r}}
  \left(\frac{ \theta n}{\theta n+r}\right)^{\theta n+r}\left(\frac{n-\theta n}{n-\theta n-r}\right)^{n-\theta n-r},
  \end{align*} 
  where $c>0$ is a universal constant.
  Since $\log\left(1+x\right)\ge x-x^2$ for $x>0$, we get
  \[
  \left(\frac{n- \theta n}{n- \theta n-r}\right)^{n- \theta n-r}=\left(1+\frac{r}{n- \theta n-r}\right)^{n- \theta n-r}\ge\exp\left(r-\frac{r^{2}}{n-n \theta -r}\right).
  \]
  Similarly, since $\log\left(1-x\right)\ge-x- 2x^2 $ for $x\in[0, \frac{1}{2} ]$
  and $ \frac{r}{\theta n +r} \in [0, \frac{1}{2}]$ for $ 0\le r \le \frac{1}{10}\theta n$,
  \[
  \left(\frac{ \theta n}{ \theta n+r}\right)^{ \theta n+r}=\left(1-\frac{r}{ \theta n+r}\right)^{ \theta n+r}\ge\exp\left(-r-\frac{ 2r^{2}}{\theta n+r }\right).
  \]
  Hence, together using that  $  \frac{1}{n-\theta n -r} \le \frac{1}{n-\frac{11}{10}\theta n} \le \frac{1}{ \frac{11}{10}\theta n} \le \frac{1}{\theta n + r}$
  when $0 < \theta < \frac{1}{3}$, we get
  \begin{align} \label{eq: PthetanrSmall} 
  P_{\theta n+r}\ge\frac{c}{\sqrt{\theta n+r}}\exp\left(-\frac{ 3 r^{2}}{\theta n+r}\right).
  \end{align}
  The bound $ P_{ \ge \theta n +r} \ge P_{\theta n +r}$
  is insufficient to get $\eqref{eq: sufficientTail}$ when 
  $r$ is small. We will bound $P_{\ge \theta n +r}$ 
  by comparing it with the sum of a geometric sequence starting with $ P_{\theta n +r}$.

  For $r'>0$ with $ \theta n + r' \in \mathbb{N}$ and $n-\theta n -r'>0$, 
  by \eqref{eq: PthetanrFormula} we have 
  \begin{align*} 
  \frac{P_{\theta n+r'+1}}{ P_{\theta n+r'}}
  = \frac{n-\theta n -r'}{ \theta n + r'+1} \frac{\theta}{1-\theta}
  = \frac{ 1-\frac{r'}{\left(1-\theta \right)n}}{1+\frac{1+r'}{\theta n} }.
  \end{align*}
  Since $ \frac{1}{1+x} \ge 1- x$ for all $x\ge 0$, 
  $$
  \frac{P_{\theta n+r'+1}}{ P_{\theta n+r'}}
  \ge \bigg( 1- \frac{r'}{(1-\theta)n} \bigg)
    \bigg( 1- \frac{1+r'}{\theta n}\bigg)
  \ge 1- \frac{r'}{(1-\theta)n} - \frac{1+r'}{\theta n}.
  $$
  Next, with $\frac{\theta }{1-\theta} \le \frac{c_{b}}{1-c_{b}}  \le \frac{1}{3}$ 
  when $c_{b}>0$ is small enough, 
  \begin{align*} %\label{eq: Pthetanr'}
  \frac{ P_{\theta n+r'+1}}{ P_{\theta n+r'}}
  \ge  1- \frac{1+\frac{4}{3}r'}{\theta n}.
  \end{align*}
  Notice that for $0\leq i \le \max(\lceil r \rceil,\, \lceil \sqrt{\theta n} \rceil ) := u$,  we have
  $ 1-  \frac{   1+ \frac{4}{3}(r+i) }{\theta n} 
  \ge 1- \frac{ 4 u }{\theta n}
  $
  where we used that $ \sqrt{\theta n}$ is greater than a large
  absolute constant. 
   Hence, for $1\leq i \le u$, 
  $$
    P_{\theta n + r + i} 
  \ge  
    P_{\theta n + r} \Big( 1- \frac{4u}{\theta n}\Big)^{i}. 
  $$
  Then, 
  \begin{align*}
  P_{\ge \theta n+r}
  \ge 
  \sum_{i=0}^{u} P_{\theta n +r + i} 
  \ge 
  P_{\theta n+r} \cdot \bigg(\sum_{i=0}^{u}
 \Big(1-\frac{4u}{\theta n}\Big)^{i}\bigg)
  =
    P_{\theta n+r} \cdot \frac{1-\big(1-\frac{4u}{\theta n}\big)^{u+1}}{\frac{4u}{\theta n}}
  \ge P_{\theta n+r} \cdot \frac{\theta n}{8 u}
  \end{align*}
  where the last inequality holds since
 $ \big(1-\frac{4u}{\theta n}\big)^{u+1} 
 \le \exp \big( -\frac{4u^2}{\theta n} \big) 
 \le \exp(-4) \le \frac{1}{2}$ since $u \ge \sqrt{\theta n}$.
   Together with \eqref{eq: PthetanrSmall}, we obtain 
  \[
  P_{\ge \theta n+r} \ge  \frac{\theta n}{8u} \frac{c}{\sqrt{\theta n +r}} \exp\bigg( - \frac{3r^2}{\theta n + r} \bigg).
  \]
  With $ \theta n \ge \frac{\theta n + r}{2} $ (since $r \le \frac{\theta n}{10}$) and  
  $u \le 2 \max(r,\, \sqrt{\theta n} )$ (if $\theta n$ is large enough), 
  $\frac{\theta n}{8u} \frac{c}{\sqrt{\theta n +r}}
  \ge \frac{c}{32} \frac{\sqrt{\theta n +r}}{\max(r,\, \sqrt{\theta n})}
  $. Finally, it is easy to check that %obviously $ \frac{1}{x} \ge \exp(-x^2)$
  %for $x \ge 0$, and thus 
  $$
    \frac{\sqrt{\theta n +r}}{\max(r,\, \sqrt{\theta n})} \ge \exp\Big( - \frac{r^2}{\theta n +r}\Big).
  $$
  %where the case when $ 0 \le r \le \sqrt{\theta n}$ is immediate 
  %since $1$ is an intermediate value between the values on both sides.  
  Now we conclude that 
  $$
    P_{\ge \theta n +r} \ge \frac{c}{32} \exp\bigg( - \frac{ 4r^2}{\theta n +r}\bigg),
  $$
  and the proof of \eqref{eq: sufficientTail} is finished. 
  \end{proof}

 \begin{lemma} \label{lem: UptailNorm}
    There exist constants $c_b>0$ and $\tilde{C}_b>1$ so that the following holds.
    Let $n$ be a sufficiently large integer and let $\alpha >0$.  
    For $ \theta \in \Big[ \frac{1}{c_{b}n}, c_{b} \Big]$, 
    let $Y_1, \dots, Y_n$ be i.i.d Bernoulli random variables with 
    parameter $\theta$. Set $X=(X_1,X_2, \dots, X_n)$, with 
    $X_i = \alpha Y_i$, $i\leq n$. Then, 
    for all $t \in \left[ 0, \frac{\alpha \sqrt{n}}{4} \right]$, 
    \begin{align} \label{eq: UptailNorm}
     \mathbb{P} \big\{ \|X\|_2 \ge 
        \Med\|X\|_2 + t \big\} \ge  \frac{1}{\tilde{C}_b}\exp\bigg( - \tilde{C}_b 
    \log\bigg(2 + \frac{t^2}{\theta n\alpha^2}\bigg)\frac{t^2}{\alpha^2} \bigg). 
    \end{align}
 \end{lemma}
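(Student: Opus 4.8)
The plan is to reduce \eqref{eq: UptailNorm} to the Binomial tail bound of Lemma~\ref{prop: BinomialTail}. Since $X_i=\alpha Y_i$ with $Y_i\in\{0,1\}$, we have $\|X\|_2=\alpha\sqrt S$, where $S:=\sum_{i=1}^n Y_i$ has the $\mathrm{Binomial}(n,\theta)$ distribution. Fix a median $m$ of $S$; since $s\mapsto\alpha\sqrt s$ is increasing, $\alpha\sqrt m$ is a median of $\|X\|_2$, and by the classical bound on the median of a binomial law $|m-\theta n|\le 1$. As $S$ is integer valued, for $t>0$
\[
\{\|X\|_2\ge\Med\|X\|_2+t\}=\{S\ge N\},\qquad N:=\lceil(\sqrt m+t/\alpha)^2\rceil,
\]
and $N\ge m+1\ge\theta n$. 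Set $r:=N-\theta n\ge 0$; this is the parameter we feed into Lemma~\ref{prop: BinomialTail}.

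First I would check that $r$ is admissible, i.e. $r\le n-\theta n$. Using $t/\alpha\le\sqrt n/4$, the smallness of $\theta$ (guaranteed by taking the constant $c_b$ small, no larger than the one of Lemma~\ref{prop: BinomialTail}, which also makes $\theta n\ge 1/c_b$ larger than any prescribed constant) and $m\le\theta n+1$, we get $N\le(\sqrt{\theta n+1}+\sqrt n/4)^2+1\le n/2\le n-\theta n$ once $n$ is large. Lemma~\ref{prop: BinomialTail} then gives
\[
\mathbb P\{\|X\|_2\ge\Med\|X\|_2+t\}\ge\frac{1}{C_b}\exp\Big(-C_b\log\Big(2+\tfrac{N}{\theta n}\Big)\frac{r^2}{N}\Big),
\]
so it only remains to bound the exponent by a constant multiple of $\log\big(2+\tfrac{t^2}{\theta n\alpha^2}\big)\tfrac{t^2}{\alpha^2}$.

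For the logarithmic factor: $(\sqrt m+t/\alpha)^2\ge m\ge 1$ gives $N\le 2(\sqrt m+t/\alpha)^2$, and combining with $m\le 2\theta n$ and $(a+b)^2\le 2(a^2+b^2)$ yields $2+N/(\theta n)\le 5\big(2+\tfrac{t^2}{\theta n\alpha^2}\big)$, hence $\log(2+N/(\theta n))\le C\log\big(2+\tfrac{t^2}{\theta n\alpha^2}\big)$ with $C$ universal. For the factor $r^2/N$ I split into two cases. If $t/\alpha\ge1/\sqrt m$, then $m-\theta n\le1$ and the ceiling together contribute an additive constant $\le 2\le 2\sqrt m\,t/\alpha$, so $r\le 4\sqrt m\,t/\alpha+t^2/\alpha^2\le\tfrac{4t}{\alpha}(\sqrt m+t/\alpha)$; since $N\ge(\sqrt m+t/\alpha)^2$ this gives $r^2/N\le 16\,t^2/\alpha^2$, and \eqref{eq: UptailNorm} follows after absorbing $C_b$, $C$ and the constant $16$ into $\tilde C_b$. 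If $t/\alpha<1/\sqrt m$, then $r\le5$ and $N\ge\theta n/2$, while $N/(\theta n)\le2$, so the exponent is bounded by a universal constant; thus the left-hand side exceeds a universal constant $c_0>0$, whereas the right-hand side of \eqref{eq: UptailNorm} is at most $1/\tilde C_b$ (as $t^2/\alpha^2<1/m$ here), so the inequality holds provided $\tilde C_b\ge1/c_0$. The degenerate case $t=0$ is immediate from $\mathbb P\{\|X\|_2\ge\Med\|X\|_2\}\ge1/2$ together with $\tilde C_b\ge2$.

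The one place where care is genuinely required is the case distinction in the last step: the ceiling defining $N$ and the $O(1)$ gap between $m$ and $\theta n$ add an $O(1)$ term to $r$, and if such a term survived in the exponent it would be multiplied by the possibly large factor $\log(2+N/(\theta n))$ and break the bound. The dichotomy resolves this: once $t/\alpha\gtrsim1/\sqrt m$, the $O(1)$ corrections are dominated by the true term $\sqrt m\,t/\alpha$ and disappear into multiplicative constants; below that threshold both sides of \eqref{eq: UptailNorm} are of constant order, so the estimate is automatic.
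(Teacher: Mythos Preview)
Your proof is correct and follows the same overall approach as the paper: write $\|X\|_2=\alpha\sqrt{S}$ with $S$ Binomial, translate the event into $\{S\ge\theta n+r\}$, and invoke Lemma~\ref{prop: BinomialTail}. The only difference is in the bookkeeping: the paper first replaces $\Med\|X\|_2$ by $\alpha\sqrt{\theta n}$ (proving an auxiliary inequality for $t\in[0,\alpha\sqrt{n}/2]$ and then shifting $t\mapsto t+\alpha$), and splits the analysis at the threshold $t/\alpha=\sqrt{\theta n}$ according to whether $r\lesssim\theta n$ or $r\gtrsim\theta n$; you instead work directly with the integer median $m$ of $S$, carry the $O(1)$ corrections from the ceiling and from $|m-\theta n|\le 1$ explicitly, and split at the much smaller threshold $t/\alpha=1/\sqrt m$, below which both sides of \eqref{eq: UptailNorm} are of constant order. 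Your single bound $r\le \frac{4t}{\alpha}(\sqrt m+t/\alpha)$, $N\ge(\sqrt m+t/\alpha)^2$ in the main case neatly replaces the paper's two-regime computation, at the cost of a slightly more delicate handling of the additive constants.
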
 
\begin{proof}
  %Since $Y_i$ takes values in $\{0,1\}$ for $ i \in [n]$, we have
  Clearly,
  $$\|X\|_2= \alpha \sqrt{\sum_{i=1}^n Y_i}.$$ 
  Since the mapping $ y \mapsto \alpha \sqrt{y}$ is monotone increasing for $y \ge 0$, 
  the median estimate for Binomial random variable $$ \lfloor \theta n \rfloor \le  \Med\Big( \sum_{i=1}^n Y_i\Big) \le \lceil \theta n \rceil$$
  (see \cite{KB80}) implies 
  \begin{align} \label{eq: meanMedian}  
   \alpha \sqrt{ \lfloor\theta n \rfloor } \le \Med \|X\|_2  \le \alpha \sqrt{ \lceil \theta n \rceil }.  
   \end{align}
   Thus, $$ | \Med \|X\|_2 - \alpha \sqrt{\theta n} | \le 
    \alpha \sqrt{ \lceil \theta n \rceil } - \alpha \sqrt{ \lfloor \theta n \rfloor } \le \alpha, $$
    where the last inequality holds when $ \theta n \ge 1$. 
    
    We claim that in order to verify the lemma, it is sufficient to establish the following bound:
    \begin{align} \label{eq: sufficinetUptailNorm}
      \forall t \in \bigg[0, \alpha \frac{\sqrt{n}}{2}\bigg],\,\,\,
     \mathbb{P} \big\{ \|X\|_2 \ge 
         \alpha \sqrt{\theta n} + t \big\} \ge \frac{1}{C} \exp\bigg( - C \log\bigg(2 + \frac{t^2}{\theta n\alpha^2}\bigg)\frac{t^2}{\alpha^2} \bigg)
    \end{align}
    for a universal constant $C>1$. Indeed, suppose \eqref{eq: sufficinetUptailNorm} holds. For $t \in [0, \frac{\alpha \sqrt{n}}{4}]$,
    $$
      \Prob \big\{ \|X\|_2 \ge \Med \|X\|_2 + t \big\}
    \ge 
      \Prob \big\{ \|X\|_2 \ge \alpha \sqrt{ \theta n} + \alpha + t \}
    \ge \frac{1}{C} 
      \exp\bigg( - C \log\bigg(2 +  \frac{1}{\theta n} \bigg( \frac{t}{\alpha}+1 \bigg)^2\bigg)\bigg(\frac{t}{\alpha}+ 1\bigg)^2 \bigg),
    $$
    where the last inequality follows from \eqref{eq: sufficinetUptailNorm} since 
    $ \alpha +t \in \Big[ 0, \alpha \frac{\sqrt{n}}{2}\Big]$, under the assumption $ n \ge 16$. 
    Since $ (\frac{t}{\alpha}+1 )^2 \le 2(\frac{t}{\alpha})^2 +2$, we get
    $$ \log\bigg( 2 + \frac{1}{\theta n} \bigg( \frac{t}{\alpha}+1 \bigg)^2 \bigg) 
     \le \log \bigg( 2 + \frac{2}{\theta n} +  \frac{2}{\theta n} \bigg(\frac{t}{\alpha} \bigg)^2 \bigg) 
     \le \log \bigg( 2 \cdot \bigg( 2+ \frac{t^2}{\theta n \alpha^2} \bigg) \bigg)
     \le  2 \log \bigg( 2+ \frac{t^2}{\theta n \alpha^2}\bigg) ,
     $$
    where we used $ \frac{1}{\theta n} \le 1$ in the second inequality. 
    Then, applying the bounds $ (\frac{t}{\alpha}+1 )^2 \le 2(\frac{t}{\alpha})^2 +2$ and $ \frac{1}{\theta n} \le 1$ again, 
    we obtain
    $$
      \log\bigg(2 +  \frac{1}{\theta n} \bigg( \frac{t}{\alpha}+1 \bigg)^2\bigg)\bigg(\frac{t}{\alpha}+ 1\bigg)^2
    \le 
      4 \log\bigg(2 +  \frac{t^2}{\theta n \alpha^2 } \bigg) 
      \bigg(\bigg(\frac{t}{\alpha}\bigg)^2+1 \bigg) 
    \le 
      4 \log(3) + 8 \log\bigg(2 +  \frac{t^2}{\theta n \alpha^2 } \bigg) 
      \bigg(\frac{t}{\alpha}\bigg)^2,
    $$
    where we applied the inequality $\log\Big(2 +  \frac{t^2}{\theta n \alpha^2 } \Big) 
    \le \max \Big( \log(3), \log\Big(2 +  \frac{t^2}{\theta n \alpha^2 } \Big) \Big(\frac{t}{\alpha}\Big)^2\Big)$. Therefore, 
    $$
      \Prob \big\{ \|X\|_2 \ge \Med \|X\|_2 + t \big\}
      \ge 
       \frac{1}{C} \exp(-4 \log(3)C) \exp\bigg(- 8C \log\bigg(2 +  \frac{t^2}{\theta n \alpha^2 } \bigg) 
      \bigg(\frac{t}{\alpha}\bigg)^2 \bigg),
    $$ and
    \eqref{eq: UptailNorm} follows from \eqref{eq: sufficinetUptailNorm} with $C_b = \max( C\exp(4\log(3)C), 8C)$. 
The claim is established.

\medskip

  Now we prove \eqref{eq: sufficinetUptailNorm}. First, since $\|X\|_2= \alpha \sqrt{\sum_{i=1}^n Y_i}$, 
  \begin{align*}
    \mathbb{P} \big\{ \|X\|_2 \ge \alpha \sqrt{ \theta n} + t \big\}
  = &  \mathbb{P} \bigg\{ \sum_{i=1}^n Y_i - \theta n \ge 
    \underbrace{2 \sqrt{ \theta n}  \frac{t}{ \alpha } +  \frac{t^2}{\alpha^2} }_{r}
  \bigg\}.
  \end{align*}
  For $0 \le  \frac{t}{\alpha} \le \sqrt{ \theta n}$, we have
  $0 \le r  \le 3\theta n$. 
  We apply Lemma~\ref{prop: BinomialTail} and use that $ \log( 2 + \frac{\theta n+r}{\theta n}) \le \log(6)$, to conclude
  \begin{align*}
    \mathbb{P} \big\{ \|X\|_2 \ge \sqrt{\alpha \theta n} + t \big\}
    &\ge \frac{1}{C_b}\exp\bigg( - C_b\log(6)\cdot \frac{r^2}{\theta n}\bigg)\\
    &\ge \frac{1}{C_b}\exp\bigg( - C_b\log(6)\cdot 9  \frac{t^2}{\alpha^2} \bigg)
    \ge \frac{1}{C_b}\exp\bigg( - C_b\frac{9\log(6)}{\log(2)} \log\bigg(2 + \frac{t^2}{\theta n \alpha^2} \bigg)   \frac{t^2}{\alpha^2} \bigg).
  \end{align*}
   For $ \sqrt{\theta n} \le \frac{t}{\alpha} \le \frac{1}{2}\sqrt{ n}$, we have
   $ \theta n \le r \le  \frac{ 3t^2}{\alpha^2} \le \frac{3}{4}n \le n - \theta n$
   where the last inequality holds when $c_{b}>0$ is chosen small enough.   
   Applying Lemma~\ref{prop: BinomialTail} again, we obtain 
   \begin{align*}
    \mathbb{P} \big\{ \|X\|_2 \ge \alpha\sqrt{ \theta n} + t \big\}
    \ge& \frac{1}{C_b}\exp\bigg( - C_b \log\bigg(2 + \frac{ 6t^2}{\theta n \alpha^2 }\bigg) \cdot \frac{3t^2}{\alpha^2} \bigg).
   \end{align*}
  We have $\log\Big(2 + \frac{6t^2}{\theta n \alpha^2 }\Big)
   \le 3\log\Big(2 + \frac{t^2}{\theta n \alpha^2}\Big)$,
   and hence
\begin{align*}
    \mathbb{P} \big\{ \|X\|_2 \ge \alpha\sqrt{ \theta n} + t \big\}
    \ge& \frac{1}{C_b}\exp\bigg( - 9C_b \log\bigg(
    2 + \frac{t^2}{\theta n\alpha^2}  \bigg) \frac{t^2}{\alpha^2} \bigg).
   \end{align*}
   Now \eqref{eq: sufficinetUptailNorm} follows by choosing $C: = \max\big( \frac{9\log(6)}{\log(2)}, 9\big)\,C_b$.
   \end{proof}

 \begin{proof}[Proof of Proposition~\ref{prop: example}]
  Let $X(\theta)=(X_1(\theta),\dots, X_n(\theta))$ be the random vector defined in Lemma~\ref{lem: UptailNorm} with parameters $\theta \in [ \frac{1}{c_{b}n}, c_{b}]$ and $\alpha: = K(\log(1/\theta))^{1/p}$ (the actual choice of $\theta$ will be made later in the proof). 
  Then, $\{X_i(\theta)\}_{i=1}^n$ are i.i.d random variables with the $\|\cdot\|_{\psi_p}$-norm bounded above by $K$. 
  We want to emphasize that the distribution of $X$ depends on the parameter $\theta$, and 
  that our future choice of $\theta$ will also depend on $t$. %, we will use the notation $X(\theta)$ to denote $X$. 

  Applying Lemma~\ref{lem: UptailNorm} with $ 0\le t  \le \frac{K\sqrt{n}}{4} 
  \le \frac{\alpha \sqrt{n}}{4}$ and any $ \theta \in [ \frac{1}{c_{b}n}, c_{b}]$, we get
  \begin{align}\label{3124-18-0918-098-098}
   \mathbb{P} \big\{ \|X(\theta)\|_2 \ge \Med\|X( \theta)\|_2 +t \big\}
  \ge \frac{1}{\tilde{C}_b} \exp\bigg( - \tilde{C}_b\log \bigg( 2 + \frac{t^2}{K^2\theta n (\log(1/\theta))^{2/p} }\bigg) \frac{t^2}{K^2(\log(1/\theta))^{2/p} }\bigg). 
  \end{align}
  
  \medskip
  
  {\bf Case 1:} $t \in \Big[
  \sqrt{\frac{K^{2} (\log n)^{2/p}}{3c_{b}}}, \sqrt{\frac{c_{b}K^{2}n}{3}} \Big]$. In this case, we define
  \begin{align*}
    \theta := \theta(t) = \bigg(\frac{K^2n}{3t^2} \Big(
    \log\Big( \frac{K^2n}{3t^2} \Big) \Big)^{2/p} \bigg)^{-1}.
  \end{align*}
  Since $ t \mapsto \theta(t)$
  is a monotone increasing function for $ t \le K \sqrt{n/3}$, our choice of $\theta$ satisfies 
  \[
    \frac{1}{c_{b}n} \le 
    \underbrace{\frac{(\log n )^{2/p} }{c_{b}n \Big(\log\Big( \frac{ c_{b}n}{ (\log n )^{2/p} } \Big) \Big)^{2/p} }}_{{\rm when }\, t= \sqrt{\frac{K^{2}(\log n )^{2/p}}{3c_{b}}}}
    \le \theta \le \underbrace{\frac{c_{b}}{ \Big( \log \Big( \frac{1}{c_{b}}\Big)\Big)^{2/p}} }_{{\rm when}\, t=\sqrt{\frac{c_{b}K^{2}n}{3}} }  \le c_{b},
  \]
  which conforms to the conditions in Lemma~\ref{lem: UptailNorm}, and therefore the estimate~\eqref{3124-18-0918-098-098} is valid.
 Our choice of $\theta$ implies  $  \log( 1/\theta)  \ge \log\big( \frac{K^2 n}{3t^2} \big) $ and thus 
  \begin{align*}
   \log\left(2+\frac{3t^{2}}{K^{2}\theta n
   (\log\left(1/\theta\right))^{2/p}
   }\right)\frac{3t^{2}}{K^{2}
    (\log\left(1/\theta\right))^{2/p}} 
   =&  \log\bigg( 2 + 
    \frac{\big(\log\big(\frac{K^2n}{3t^2}\big)\big)^{2/p}}{(\log(1/\theta))^{2/p}} 
    \bigg)
    \frac{3t^2}{K^2(\log(1/\theta))^{2/p}}  \\
   \le  &
    \frac{3\log(3)\,t^2}{K^2\big( \log\big( \frac{K^2n}{3t^2}\big)\big)^{2/p}}.
  \end{align*}
 Further, the assumption that $ t \le \sqrt{\frac{c_b K^2n }{3}}$ and $c_b>0$ is sufficiently small implies that $ \frac{K^2n}{t^2} \ge 9$ and therefore 
 \begin{align} \label{eq: annoyingLog}
 \log\Big( \frac{K^2n}{3t^2}\Big) \ge  \frac{1}{2}\log\Big( \frac{K^2n}{t^2}\Big) = \frac{1}{4} \log\Big(\Big( \frac{K^2n}{t^2}\Big)^2\Big) 
    \ge \frac{1}{4}\log \Big( 2+ \frac{K^2n}{t^2}\Big).
 \end{align}
 We conclude that 
  \begin{align*}
  \mathbb{P}\big\{ 
    \|X(\theta(t))\|_2\ge \Med \|X(\theta(t))\|_2+t
    \big\} 
    &\geq
    \frac{1}{\tilde{C}_b} \exp\bigg( - \tilde{C}_b\frac{3\log(3)\,t^2}{K^2\big( \log\big( \frac{K^2n}{3t^2}\big)\big)^{2/p}}\bigg)\\
    &\ge  \frac{1}{\tilde{C}_b}\exp\bigg (- 3\cdot 4^{2/p}\tilde{C}_b \log(3)\, \frac{t^{2}}{K^{2}\big(\log\big (2+\frac{ K^2 n}{ t^2 }\big)\big)^{2/p}}\bigg).
  \end{align*}
   Next, we will handle the lower tail estimate. 
   We can assume that $ \lfloor \theta n \rfloor \ge \theta n/3$ since $\theta n \ge  \frac{1}{c_{b}}$ and
   $c_{b}>0$ is sufficiently small. Then, by \eqref{eq: meanMedian} we have
  \begin{align*}
  \Med\|X(\theta(t))\|_2 &\ge  K(\log( 1/\theta))^{1/p} \sqrt{\lfloor\theta n \rfloor }
  \ge
  K(\log( 1/\theta))^{1/p} \sqrt{\theta n/3 }\\
   &= 
   \sqrt{ \bigg(\log\Big(\frac{K^2n}{3t^2} \Big(\log \Big( \frac{K^2n}{3t^2} \Big)\Big)^{2/p}\Big)\bigg)^{2/p}
   \frac{t^2}{(\log(K^2n/3t^2))^{2/p}} }   
   \ge t. 
  \end{align*}
  As a consequence, 
  \begin{align*}
  \mathbb{P} \big\{ \|X(\theta(t))\|_2\le \Med\|X(\theta(t))\|_2-t \big\} 
  &\ge  \mathbb{P}\left\{ \|X(\theta(t))\|_2 = 0 \right\} = (1-\theta)^n \ge \exp\big( -2\theta n \big)\\
  &= \exp\bigg( - \frac{6t^2}{K^2 \big( \log(K^2n/3t^2) \big)^{2/p} }\bigg).
  \end{align*}
  Finally, by \eqref{eq: annoyingLog}, 
   \[
  \mathbb{P}\big\{ \|X(\theta(t))\|_2\le \Med \|X(\theta(t))\|_2-t\big\} 
  \ge  \exp\bigg(-6\cdot 4^{2/p}\frac{t^{2}}{K^{2} \big( \log\big(
  2+\frac{K^2 n}{ t^2 }
  \big)\big)^{2/p} }\bigg).
  \]
We have shown that for $
  \sqrt{\frac{K^{2} (\log n)^{2/p}}{3c_{b}}}
 \le t\le \sqrt{\frac{c_{b}K^{2}n}{3}}$, the proposition holds 
 with $C=  \max(48\tilde{C}_b \log(3),\, 6\cdot 16)$,  since $ p \ge 1$. 

 {\bf Case 2: } $ 0\le t  \le \sqrt{\frac{K^{2} (\log n)^{2/p}}{3c_{b}}}$.
  Set $ t_0: = \sqrt{ \frac{K^2 ( \log n)^{2/p}}{3c_b}}$, 
  and let $\tilde{X}: = X(\theta(t_0))$. We have, by the above,
  $$\Prob\big\{\|\tilde{X}\|_2-\Med\,\|\tilde{X}\|_2\geq t_0 \big\}\geq \frac{1}{C} \exp\bigg(
- C\cdot \frac{t_0^2}{K^2 \big(\log\big(2+\frac{ K^2n }{ t_0^2 }\big) \big)^{2/p} }\bigg).$$  
When $n$ is greater than a sufficiently large constant,  
$$\frac{t_0^2}{K^2 \big(\log\big(2+\frac{ K^2n }{ t_0^2 }\big) \big)^{2/p} }
  = \frac{(\log n)^{2/p}}{3c_b  \Big(\log \Big( 2 + \frac{3c_b n}{(\log n)^{2/p}}\Big)\Big)^{2/p}}
  \le \frac{(\log n)^{2/p}}{3c_b  \big(\log( \sqrt{n}) \big)^{2/p}}
  \le \frac{2}{3c_b},
$$
where we used that $p \ge 1$. 
We conclude that for $t \in [0, t_0]$,
$$\Prob\big\{\|\tilde{X}\|_2-\Med\,\|\tilde{X}\|_2\geq t \big\}\geq 
\Prob\big\{\|\tilde{X}\|_2-\Med\,\|\tilde{X}\|_2\geq t_0\big\}\geq 
\frac{1}{C} \exp\bigg(- \frac{2C}{3c_b} \bigg).$$
The lower tail is treated the same way. 
By adjusting the constant $C$, it implies the proposition for $t \in [0, t_0]$,
and completes the proof.
  \end{proof}

\section{Proof of Theorem~\ref{1-3974-1948798}}\label{1876491874968}

Our proof of Theorem~\ref{1-3974-1948798} is based on a modification of {\it the induction method} of Talagrand.
In fact, the first part of the proof which deals with setting up a recursive relation for a modified convex distance,
essentially repeats, up to minor changes, the standard account of the method (see, for example, \co{ \cite[p.~72-79]{Ledoux}}).

\co{We recall that {\it Talagrand's convex distance} between
a point $x\in\R^n$ and a set $A\subset\R^n$ is given by
$$
\max\limits_{a:\,\|a\|_2=1}\min\limits_{y\in A}\sum_{i=1}^n a_i\,{\bf 1}_{\{x_i\neq y_i\}}.
$$
Since we work with measures with (possibly) unbounded supports,
it is crucial for us to track the ``quantitative'' distance between $x_i$ and $y_i$, $i\leq n$, and to consider the differences $|x_i- y_i|$ instead of the indicators ${\bf 1}_{\{x_i\neq y_i\}}$.}

\begin{defi}
Given a point $x\in\R^n$ and a non-empty subset $A$
of $\R^n$, we define the {\bf modified convex distance} between $x$ and $A$ as
$$
\dist^c(x,A):=\max\limits_{a:\,\|a\|_2=1}\min\limits_{y\in A}\sum_{i=1}^n a_i\,|x_i- y_i|.
$$
\end{defi}

Given a non-empty $A\subset\R^n$ and $x\in \R^n$, we denote by $U(x,A)$ the set of all vectors in $\R^n_+$
of the form
$$
U(x,A):=\bigg\{\Big(|x_i- y_i|\Big)_{i=1}^n:\;y\in A\bigg\},
$$
and let $V(x,A)\subset\R^n$ be the convex \co{hull} of $U(x,A)$. 

\begin{lemma} \label{lem: distanceConvex}
We have
\begin{align} \label{eq: convexDistance}
\dist^c(x,A)=\dist(0,V(x,A)), 
\end{align}
where the distance on the right hand side is the usual Euclidean distance in $\R^n$. Furthermore, when $A$ is convex, 
\begin{align} \label{eq: convexDistanceConvexSet}
    \dist^c(x,A) = \dist(x,A). 
\end{align}
\end{lemma}
\begin{proof}
\co{The first assertion of the
  lemma can be derived following Talagrand's treatment for the original convex distance
  (see, in particular, \cite[p.~72-73]{Ledoux}).}

We will provide the proof for the second assertion of the lemma for \co{reader's} convenience.
Let $A$ be a non-empty convex set. \co{Without loss of generality, $A$ is closed, and $x\notin A$.
By a compactness argument, there is a vector $y \in x-A$ with $\|y\|_2=\dist(0,x-A)=\dist(x,A)$.
The extremal property of $y$ implies that for all $ z \in x-A$, we have $z \cdot y \ge y \cdot y$.}

Now, for any $z \in \mathbb{R}^n$, let $\tilde z$ be the vector obtained from $z$
by replacing each component of $z$ by its absolute value. For each point $z' \in U(x,A)$, there exists $z \in x-A$ such that $z' = \tilde z$. Since $ \tilde z \cdot \tilde y \ge z \cdot y \ge \|y\|_2^2$, the set $U(x,A)$ is contained in 
the half-space $\{ w\in\R^n\,:\, w \cdot \tilde y \ge \|y\|_2^2 \}$, and the same is true for its convex \co{hull} $V(x,A)$.
\co{Therefore, $\dist(0,V(x,A)) \geq \| \tilde y\|_2 = \|y \|_2$.
On the other hand, since $x-y\in A$, we have $\tilde y \in U(x,A)\subset V(x,A)$,
and therefore $\dist(0,V(x,A))\leq \| \tilde y\|_2 = \|y \|_2$.
We conclude that $\dist(0,V(x,A))=\|y \|_2$, and the result follows.}
\end{proof}

The main technical result in this section is the following proposition.
\begin{prop}\label{prop: choiceOfL}
Let $K>0$, and let $\mu_1,\mu_2,\dots,\mu_{n}$ be $K$-subgaussian probability measures in $\R$.
Let $X=(X_1,X_2,\dots,X_{n})$ be distributed in $\R^{n}$ according to $\mu_1\times\mu_2\times\dots\times
\mu_{n}$, and let $A\subset\R^{n}$
be a non-empty Borel subset. 
Then, for any $\delta \in (0, \frac{1}{2} ]$, 
$$
\Exp\exp\bigg(\frac{\tilde c\,(\dist^c(X,A))^2}{K^2\log\big(2+\frac{n}{\log(2+ 1/\delta )}\big)}\bigg)
\leq \frac{4}{\Prob\big\{X\in A\big\} \delta },
$$
where $\tilde c>0$ is a universal constant.  
\end{prop}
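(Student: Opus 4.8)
The plan is to prove Proposition~\ref{prop: choiceOfL} by induction on the dimension $n$, in the spirit of Talagrand's induction method for the convex distance, but carrying an additional ``vertical'' term that records the deviation of the last coordinate from the centre of $\mu_n$. Write $\ell:=\log(2+1/\delta)$ and $L_m:=K^2\log(2+m/\ell)$, so $0<L_1\le L_2\le\dots$; the statement to be proved for every $m$, every $m$--fold $K$--subgaussian product, every Borel set and every $\delta\in(0,\tfrac12]$ is $\Exp\exp\!\big(\tilde c\,\dist^c(X,A)^2/L_m\big)\le 4/(\Prob\{X\in A\}\,\delta)$. The base case $m=1$ is elementary: $\dist^c(\cdot,A)$ agrees with $\dist(\cdot,A)$ in dimension one, $L_1\asymp K^2$, and bounding $\dist(X_1,A)\le|X_1-m_1|+\dist(m_1,A)$ with $m_1$ a median of $\mu_1$, splitting on $\{X_1\in A\}$, and using $K$--subgaussianity (together with $\Prob\{X_1\in A\}\le\Prob\{|X_1-m_1|\ge\dist(m_1,A)\}$ to absorb the factor $e^{2\tilde c\dist(m_1,A)^2/L_1}$) settles it once $\tilde c$ is a small enough universal constant.

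For the inductive step, peel off the last coordinate: write $X=(X',X_n)$, $\mu':=\mu_1\times\dots\times\mu_{n-1}$, let $m_n$ be a median of $\mu_n$, and for a threshold $M>0$ put $S_{x'}:=\{\eta:(x',\eta)\in A\}$, $A_\omega:=\{x':\omega\in S_{x'}\}$ and $B_M:=\{x':\dist(m_n,S_{x'})\le M\}$. Two features make the recursion work: $B_M$ does not depend on $\omega$, and $A_\omega\subseteq B_M$ whenever $|\omega-m_n|\le M$. Using the identity $\dist^c(x,A)=\dist(0,V(x,A))$ of Lemma~\ref{lem: distanceConvex} and forming, for $\lambda\in[0,1]$, the convex combination of the nearest point of $V(x',A_\omega)\times\{0\}$ with the point of $V(x,A)$ obtained by lifting the nearest point of $V(x',B_M)$ through the cheapest vertical moves, one gets, whenever $A_\omega\neq\emptyset\neq B_M$,
$$
\dist^c\!\big((x',\omega),A\big)^2\le(1-\lambda)^2\big(|\omega-m_n|+M\big)^2+\big(\lambda\,\dist^c(x',A_\omega)+(1-\lambda)\,\dist^c(x',B_M)\big)^2,
$$
with the first term dropped (take $\lambda=0$) when $A_\omega=\emptyset$; the vertical penalty depends only on $\omega$, not on $x'$.

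One then integrates, first in $X'$ and then in $X_n=\omega$. Since $L_n\ge L_{n-1}$, the exponents carrying $\dist^c(x',A_\omega)^2$ and $\dist^c(x',B_M)^2$ may be relaxed from $\tilde c/L_n$ to $\tilde c/L_{n-1}$, keeping $L_n$ on the vertical penalty. On the event $\{|\omega-m_n|\le M\}$ the classical chain — convexity of $z\mapsto z^2$, Hölder in $X'$, and the inductive hypothesis applied to $A_\omega$ and to $B_M$ with the same $\delta$ — gives, for every $\lambda$,
$$
\Exp_{X'}\exp\!\Big(\tfrac{\tilde c}{L_n}\dist^c\!\big((X',\omega),A\big)^2\Big)\le\frac{4}{\delta}\,e^{4\tilde c(1-\lambda)^2 M^2/L_n}\,\mu'(A_\omega)^{-\lambda}\,\mu'(B_M)^{-(1-\lambda)},
$$
and optimizing over $\lambda$ through the elementary estimate $\inf_{\lambda\in[0,1]}e^{(1-\lambda)^2 s}\rho^{-\lambda}\le 2-\rho$ (valid for $\rho\in(0,1]$ provided $s$ is below a small universal constant), applicable because $4\tilde c M^2/L_n$ is a small constant as soon as $M^2\le c_1 L_n$, yields a bound of the shape $\tfrac{4}{\delta}\,\mu'(B_M)^{-1}\big(2-\mu'(A_\omega)/\mu'(B_M)\big)$; integrating this over $\{|\omega-m_n|\le M\}$, using $\Exp_\omega\mu'(A_\omega)=\Prob\{X\in A\}$ and $t(2-t)\le 1$, produces a contribution at most $4/(\Prob\{X\in A\}\,\delta)$, up to a defect dealt with below. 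On $\{|\omega-m_n|>M\}$, which by $K$--subgaussianity has $\mu_n$--probability $\le 2e^{-c_0M^2/K^2}$, one uses the $\lambda=0$ recursion and the inductive hypothesis for $B_M$ together with $\int_{|\omega-m_n|>M}e^{4\tilde c|\omega-m_n|^2/L_n}\,d\mu_n\le 2e^{-c_0M^2/(2K^2)}$ (legitimate once $\tilde c$ is small, since $L_n\ge K^2\log 2$); choosing $M^2$ to be a suitably large multiple of $L_n$ makes this tail term, and the above defect, absorbable into the free factor $4$.

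The delicate point — and the source of the factor $\log\!\big(2+\tfrac{n}{\log(2+1/\delta)}\big)$ — is the joint calibration of $M$, $L_n$ and $\delta$: one must take $M^2\asymp L_n$ so that the vertical penalty is controlled at each of the $n$ peeling steps (the constant $s$ in the calculus estimate must stay below $\tfrac14$, which caps $M^2$ by a constant multiple of $L_n$), while $M$ has to be large enough that the subgaussian tail past $M$, which is $(2+n/\ell)^{-\Omega(1)}$, and the $\omega$--restriction defect are negligible against the budget. The main obstacle is the degenerate regime in which $B_M=\emptyset$ for every admissible $M$ — i.e.\ all slices $S_{x'}$ lie at distance $\gtrsim\sqrt{L_n}$ from the bulk of $\mu_n$ — so that the recursion above is unavailable; there $\Prob\{X\in A\}$ is already very small (the bound is far from tight), and one must supply a direct argument — running the recursion with an enlarged threshold and paying for the worse calculus constant by the gain in $1/\Prob\{X\in A\}$, or estimating $\dist^c(X,A)$ coordinatewise using the product structure — and then check that the two regimes overlap with matching constants. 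Organizing this bookkeeping is precisely where the argument departs from Talagrand's bounded--support induction, and it is the crux of the proof.
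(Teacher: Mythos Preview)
Your framework is the paper's: Talagrand--type induction on the dimension for the modified convex distance, with the ``vertical'' coordinate carrying the subgaussian cost. The base case and the recursion inequality you write are correct. The difference, and the gap, is in how the inductive step is closed.

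The paper does \emph{not} vary $L_m$ and does \emph{not} try to preserve the constant $4$ at every step. It fixes $L^2=512K^2\log\big(2+\tfrac{n}{\log(2+1/\delta)}\big)$ once, and proves (Lemma~\ref{1-9847109870}) that one peeling step only costs a multiplicative factor $(1-e^{-L^2/(64K^2)})^{-2}$; after $n-1$ steps this product is bounded by $1/\delta$ by the choice of $L$. The step itself is a clean dichotomy on where the mass of $A$ sits relative to the last coordinate: if $\Prob\{X\in A,\ |X_m|\le L/4\}\ge(1-e^{-M^2})\Prob\{X\in A\}$, compare with the \emph{single best bulk slice} $A(x_b)$ (this is where the calculus lemma you quote is used); otherwise, a pigeonhole argument over the tail produces a point $x_t$ with $\Prob\{X'\in A(x_t)\}\ge 2\,\Prob\{X\in A\}\,e^{2x_t^2/L^2}$, and comparing with $A(x_t)$ alone ($\lambda=0$) gives the bound with \emph{no} multiplicative loss.

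Your scheme replaces ``best slice'' by the projected set $B_M$, and this is precisely where it breaks. The quantity $q=\mu'(B_M)$ has no a priori lower bound in terms of $p=\Prob\{X\in A\}$: whenever a nonnegligible part of $A$ lives in the tail of $\mu_n$, one can have $q\ll p$ (and $q=0$ is only the extreme case). In that regime your total bound reads $\tfrac{4}{\delta q}\big[2r-P_{\mathrm{bulk}}/q+\text{(tail integral)}\big]$, and the inequality $\tfrac{p}{q}\big[2r-P_{\mathrm{bulk}}/q+\text{(tail)}\big]\le 1$ simply fails, since $p/q$ can be arbitrarily large while $2r-P_{\mathrm{bulk}}/q$ stays near $2$. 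The slack you have in the bulk is only $1-r^2\asymp\mu_n(\text{tail})$, which is of the same order as the tail error even when $p/q$ is bounded; it cannot absorb a blow--up of $p/q$. So ``absorbable into the free factor $4$'' is not justified, and the suggestions you give for the degenerate regime (enlarge $M$, or argue coordinatewise) do not supply the missing mechanism: enlarging $M$ destroys the constraint $4\tilde c M^2/L_n\le\tfrac14$ needed for the calculus lemma, and a coordinatewise bound on $\dist^c$ loses the convexity gain entirely.

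What is missing is exactly the paper's Case~2 device: when the tail carries at least an $e^{-M^2}$ fraction of the mass of $A$, switch the comparison set from $B_M$ (or from any bulk object) to a \emph{tail} slice $A(x_t)$ chosen so that $\Prob\{X'\in A(x_t)\}\,e^{-2x_t^2/L^2}$ is large relative to $\Prob\{X\in A\}$; then the exponential cost $e^{(x_t-X_m)^2/L^2}$ is paid for by that very factor, and the step closes without loss. Once you add this dichotomy, it is also natural to fix $L$ rather than vary $L_m$: the per--step factor $(1-e^{-L^2/(64K^2)})^{-2}$ then telescopes cleanly into $1/\delta$, instead of having to be squeezed to $1$ at every step.
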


Before we consider the proof, let us show how to derive Theorem~\ref{1-3974-1948798} from the above proposition. 

\begin{proof}[Proof of Theorem \ref{1-3974-1948798}]
First, note that it is sufficient to prove the statement for $t \ge C'K\,\sqrt{\log n }$ for a large constant $C'>1$.
For the upper tail, we let $A:= \{ x \in \mathbb{R}^n\,:\, f(x)\le \Med\,f(X) \}$. 
By Proposition~\ref{prop: choiceOfL}, 
for any $\delta \in (0, \frac{1}{2}]$, 
$$
  \Exp\exp\bigg(\frac{\tilde c\,(\dist^c(X,A))^2}{ K^2\log\big(2+\frac{n}{\log(2+ 1/ \delta )}\big)}\bigg)
  \leq \frac{8}{ \delta }.
$$
Let $A_t:=\{ x \in \mathbb{R}^n:\; {\rm dist}^c(x,A) < t \}$.
Observe that, since $f$ is convex, so is the set $A$, and therefore $A_t=\{x \in \mathbb{R}^n:\; {\rm dist}(x,A) < t\}$,
in view of Lemma~\ref{lem: distanceConvex}.
Applying Markov's inequality, we get 
\begin{align*}
  \mathbb{P}\{ f(X) \geq \Med\,f(X) + t \} 
& \le \mathbb{P}\{ X \notin A_t \}  \\
& \le  \frac{8}{ \delta } \exp\bigg(
    - \frac{\tilde c\, t^2}{ K^2 \log\big(2+\frac{n}{\log(2+ 1/ \delta )}\big)}
  \bigg).
\end{align*}

We choose $ \delta := \exp\big( - \frac{ \tilde c \, t^2/4}{ K^2\log( 2 + \frac{ K^2 n}{\tilde c \, t^2/4})} \big)$
(we can assume that $\delta\leq 1/2$ if $C'$ is sufficiently large).
Observe that $ \log( 2 + \frac{1}{ \delta}) \ge \log(1/ \delta) = \frac{  \tilde c \, t^2/4}{ K^2 \log( 2 + \frac{ K^2 n}{ \tilde c \, t^2/4})}$, and hence
$$ 
  \log\Big(2 + \frac{n}{\log(2+1/ \delta)}\Big) 
\le  
  \log \bigg(2 + \frac{ K^2 n}{ \tilde c \, t^2/4} \log\Big( 2+ \frac{ K^2 n}{ \tilde c \, t^2/4}\Big)\bigg)
\le 
  2\log \Big(2 + \frac{ K^2 n}{\tilde c \, t^2/4}\Big). 
$$ 
Therefore,
$$
  \mathbb{P}\{ f(X) \ge \Med\,f(X) + t \} 
\le 
  8 \exp \bigg( 
    \frac{\tilde c \, t^2/4}{ K^2\log( 2 + \frac{ K^2 n}{\tilde c \, t^2/4})} 
    - \frac{\tilde c\, t^2}{2 K^2 \log\big(2+\frac{ K^2 n}{ \tilde c \, t^2/4 }\big)}
  \bigg)
=
  8 \exp \bigg( - \frac{ c t^2}{ K^2 \log(2+ \frac{ K^2 n}{ c t^2 })} \bigg),
$$
where $c:=\frac{1}{4} \tilde c$. By assuming $C'>1$ to be sufficiently large
and recalling that $t\geq C'\,K\sqrt{\log n}$, we get 
$$
8 \exp \bigg( - \frac{ c t^2}{ K^2 \log(2+ \frac{ K^2 n}{ c t^2 })} \bigg)
\le \exp \bigg( - \frac{  ct^2/2}{ K^2 \log(2+ \frac{ K^2 n}{ t^2 })} \bigg),
$$
which completes treatment of the upper tail.

\medskip

For the lower tail, we take $A:= \{ x \in \mathbb{R}^n\,:\, f(x) \le \Med\,f(X) - t\}$
and define $A_t:=\{ x \in \mathbb{R}^n:\; {\rm dist}^c(x,A) < t \}=
\{x \in \mathbb{R}^n:\; {\rm dist}(x,A) < t\}$ (with the last equality due to convexity of $A$). 
Then $ \{ x \in \mathbb{R}^n\,:\, f(x) \ge \Med\,f(X) \} \subset  A_t^c $ and therefore
$ \mathbb{P}\{X\in A_t^c \} \ge \frac{1}{2}$. For $ \delta \in (0, \frac{1}{2}]$,
we have, in view of Proposition~\ref{prop: choiceOfL} and Markov's inequality,
\begin{align*}
  \frac{1}{2} \le \mathbb{P}\{X\in A_t^c \} 
& \leq   \frac{4}{ \mathbb{P}\{ X \in A\} \delta }
  \exp\bigg( - \frac{\tilde c\, t^2}{\log\big(2+\frac{n}{\log(2+ 1/ \delta )}} \bigg),
\end{align*}
which implies 
\begin{align*}
  \mathbb{P}\{ f(X) \le \Med\,f(X) - t \} 
& = \mathbb{P}\{ X \in A \}  \\
& \le  \frac{8}{ \delta} \exp\bigg(
    - \frac{\tilde c\, t^2}{\log\big(2+\frac{n}{\log(2+ 1/ \delta )}\big)}
  \bigg).
\end{align*}
Now, the same choice of $\delta$ leads to the desired bound.  
\end{proof}

\bigskip

As we have mentioned above, the proof of Proposition~\ref{prop: choiceOfL} is based on the \co{induction on dimension.} 
% inductive argument (with the dimension as a parameter of the induction).
The next proposition sets up the argument.
\begin{prop}\label{198471094871098}
Let $n\geq 1$, and let $\mu_1,\mu_2,\dots,\mu_{n+1}$ be probability measures in $\R$.
Let $A\subset\R^{n+1}$
be a non-empty subset,
and for each $\alpha\in\R$, denote
$$
A(\alpha):=\big\{v\in \R^n:\;(v,\alpha)\in A\big\}.
$$
Let $X=(X_1,X_2,\dots,X_{n+1})$ be distributed in $\R^{n+1}$ according to $\mu_1\times\mu_2\times\dots\times
\mu_{n+1}$, and $X'$ be the vector of first $n$ components of $X$.
Then for every $\kappa>0$,
\begin{align*}
\Exp&\exp\big(\kappa\cdot (\dist^c(X,A))^2\big)\\
% &\leq \Exp\inf\limits_{\nu}\bigg[ \exp\bigg(
% \kappa\cdot\bigg(\int\limits_\R|X_{n+1}-\alpha|\,d\nu(\alpha)\bigg)^2\bigg)
% \prod\limits_{\alpha\in\R}\big(\Exp\,\exp\big(\kappa\cdot(\dist^c(X',A(\alpha)))^2\big)\big)^{d\nu(\alpha)}\bigg],\\
&\co{\leq \Exp_{X_{n+1}}\inf\limits_{\nu}\Bigg[ \exp\Bigg(
\kappa\cdot\bigg(\int\limits_\R|X_{n+1}-\alpha|\,d\nu(\alpha)\bigg)^2+
\int_{\mathbb{R}}\log\Big(\Exp_{X'}\,\exp\big(\kappa\cdot(\dist^c(X',A(\alpha)))^2\big)\Big)d\nu(\alpha)\Bigg)\Bigg], 
}
\end{align*}
where the infimum is taken over all discrete probability measures $\nu$ in $\R$ with a finite support.
\end{prop}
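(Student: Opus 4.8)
The plan is to follow the classical Talagrand induction step, replacing the indicator-based convex distance by the quantitative one. Fix $\kappa>0$ and condition on $X_{n+1}=\alpha_0$. The key geometric observation is that $U(X,A)$ contains, for each $\alpha$ in the (essential) range of $\mu_{n+1}$ and each $y\in A(\alpha)$, the vector $\big(\,(|X_i-y_i|)_{i=1}^n,\ |\alpha_0-\alpha|\,\big)$; consequently $V(X,A)$ contains all convex combinations of points of the form $(u,|\alpha_0-\alpha|)$ with $u\in V(X',A(\alpha))$. Given any discrete probability measure $\nu$ with finite support $S\subset\R$, pick for each $\alpha\in S$ a near-optimal $u^{(\alpha)}\in V(X',A(\alpha))$ with $\|u^{(\alpha)}\|_2$ close to $\dist^c(X',A(\alpha))$ (using Lemma~\ref{lem: distanceConvex}), and form the $\nu$-average $\big(\sum_\alpha \nu(\alpha)u^{(\alpha)},\ \sum_\alpha \nu(\alpha)|\alpha_0-\alpha|\big)\in V(X,A)$. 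This yields
$$
(\dist^c(X,A))^2\leq \Big(\int|\alpha_0-\alpha|\,d\nu(\alpha)\Big)^2+\Big\|\sum_{\alpha}\nu(\alpha)u^{(\alpha)}\Big\|_2^2,
$$
and by convexity of $\|\cdot\|_2^2$ together with the bound $\|u^{(\alpha)}\|_2\le \dist^c(X',A(\alpha))$ (up to an $\varepsilon$ we let tend to $0$) the last term is at most $\sum_\alpha \nu(\alpha)(\dist^c(X',A(\alpha)))^2$. Wait — that is the naive split and it loses the crucial cross-term improvement; instead one must keep $\big\|\sum_\alpha\nu(\alpha)u^{(\alpha)}\big\|_2^2$ as is and exploit that the vectors live in $\R^n_+$, so that one may apply the elementary inequality used in Talagrand's argument: for $u^{(\alpha)}\in V(X',A(\alpha))$,
$$
\Big\|\sum_\alpha\nu(\alpha)u^{(\alpha)}\Big\|_2^2\ \le\ \sum_\alpha \nu(\alpha)\big\|u^{(\alpha)}\big\|_2^2 ,
$$
which is just convexity again; the gain over the trivial bound comes at the exponential-integration stage, not here.

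Next, exponentiate and take expectation over $X'$. Using the above pointwise bound,
$$
\exp\big(\kappa(\dist^c(X,A))^2\big)\le \exp\Big(\kappa\big(\textstyle\int|\alpha_0-\alpha|\,d\nu\big)^2\Big)\exp\Big(\kappa\sum_\alpha\nu(\alpha)(\dist^c(X',A(\alpha)))^2\Big),
$$
and the factor $\exp\big(\kappa\sum_\alpha\nu(\alpha)(\dist^c(X',A(\alpha)))^2\big)=\prod_\alpha\exp\big(\kappa(\dist^c(X',A(\alpha)))^2\big)^{\nu(\alpha)}$ is a weighted geometric mean, so by Hölder's inequality (with exponents $1/\nu(\alpha)$)
$$
\Exp_{X'}\prod_{\alpha}\exp\big(\kappa(\dist^c(X',A(\alpha)))^2\big)^{\nu(\alpha)}\ \le\ \prod_{\alpha}\Big(\Exp_{X'}\exp\big(\kappa(\dist^c(X',A(\alpha)))^2\big)\Big)^{\nu(\alpha)}.
$$
Since $\big(\int|X_{n+1}-\alpha|\,d\nu(\alpha)\big)^2$ is $\sigma(X_{n+1})$-measurable, pulling it out and then taking the expectation over $X_{n+1}=X_{n+1}$ gives, for every fixed $\nu$,
$$
\Exp\exp\big(\kappa(\dist^c(X,A))^2\big)\le \Exp\bigg[\exp\Big(\kappa\big(\textstyle\int|X_{n+1}-\alpha|\,d\nu(\alpha)\big)^2\Big)\prod_\alpha\Big(\Exp\exp\big(\kappa(\dist^c(X',A(\alpha)))^2\big)\Big)^{\nu(\alpha)}\bigg].
$$
Taking the infimum over $\nu$ inside the outer expectation (legitimate since the bound holds for each $\nu$ simultaneously, hence for the pointwise infimum over the countable family of finitely-supported rational-weight $\nu$, and a density/continuity argument extends it to all finitely-supported $\nu$) yields exactly the claimed inequality.

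The main obstacle is the measure-theoretic bookkeeping around "$\prod_{\alpha\in\R}(\cdots)^{d\nu(\alpha)}$": one has to make sense of the product over the support of $\nu$, verify that the near-optimal selections $u^{(\alpha)}$ can be made without measurability issues (only finitely many $\alpha$ occur, so this is harmless), and justify moving the infimum over $\nu$ inside the expectation. A secondary point requiring care is that the essential range of $\mu_{n+1}$ need not be all of $\R$, so one should restrict $\nu$ to be supported where it matters, or simply note that if $A(\alpha)=\emptyset$ the corresponding factor is $+\infty$ and such $\alpha$ are excluded from the support of any competitive $\nu$; the degenerate case $A(\alpha)=\emptyset$ for $\mu_{n+1}$-a.e.\ $\alpha$ forces $\Prob\{X\in A\}=0$ and the statement is vacuous. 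None of these obstacles is deep — the inequality is, at its core, the convexity of $\|\cdot\|_2^2$ followed by Hölder — but writing it cleanly with unbounded supports is where the work lies.
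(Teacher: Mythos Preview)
Your argument is correct and is essentially the paper's own proof: the same geometric decomposition of $V((X',X_{n+1}),A)$ via the slices $V(X',A(\alpha))$, the same use of convexity of $\|\cdot\|_2^2$ (Jensen) for the first $n$ coordinates, and the same application of H\"older to pass from the weighted geometric mean to the product of expectations. The only wrinkle is the order of operations at the end: rather than first integrating over $X_{n+1}$ with a fixed $\nu$ and then arguing you may push the infimum back inside, it is cleaner (and this is what the paper does) to observe that the post-H\"older bound already holds for every realization of $X_{n+1}$ and every $\nu$, take $\inf_\nu$ at that conditional level, and only then apply $\Exp_{X_{n+1}}$; your parenthetical remark shows you see this, so no actual gap.
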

\begin{proof}
Take arbitrary element $(x,s)\in\R^n\times \R=\R^{n+1}$.
Observe that
$$U\big((x,s),A\big)=\bigcup\limits_{\alpha\in\R:\,A(\alpha)\neq\emptyset}\Big(U\big(x,A(\alpha)\big)\oplus (|s-\alpha|)\Big),
$$
where the notation ``$\oplus$'' should be understood as vector-wise concatenation producing vectors in $\R^{n+1}$.
Therefore, every vector of the form
$$
\int\limits_\R \big(v(\alpha)\oplus (|s-\alpha|)\big)\,d\nu(\alpha)=\bigg(\int\limits_\R v(\alpha)\,d\nu(\alpha),
\int\limits_\R|s-\alpha|\,d\nu(\alpha)\bigg)\in\R^{n+1},
$$
where $v(\alpha)\in V(x,A(\alpha))$, $\alpha\in\R$, and $\nu$ is a discrete probability measure
on $\R$ with a finite support, belongs to the convex hull $V((x,s),A)$ of $U((x,s),A)$.

Further, we have for every Borel probability measure $\nu$ on $\R$ and every choice of $v(\alpha)\in V(x,A(\alpha))$:
$$
\bigg\|\int\limits_\R v(\alpha)\,d\nu(\alpha)\bigg\|_2^2  
\co{ = \sum_{i=1}^n \bigg( \int_{\mathbb{R}} (v(\alpha))_i {\rm d}\nu(\alpha)\bigg)^2 }
\leq \int\limits_\R  \|v(\alpha)\|_2^2\,d\nu(\alpha),
$$
by Jensen's inequality.
Hence,
\begin{align*}
\bigg\|\bigg(\int\limits_\R v(\alpha)\,d\nu(\alpha),
\int\limits_\R|s-\alpha|\,d\nu(\alpha)\bigg)\bigg\|_2^2
\leq \int\limits_\R  \|v(\alpha)\|_2^2\,d\nu(\alpha)+\bigg(\int\limits_\R|s-\alpha|\,d\nu(\alpha)\bigg)^2.
\end{align*}
Recall \co{ from \eqref{eq: convexDistance}} that $\dist^c((x,s),A)=\dist(0,V((x,s),A))$ and $\dist^c(x,A(\alpha))=\dist(0,V(x,A(\alpha)))$.
Thus, taking $v(\alpha)\in V(x,A(\alpha))$ so that $\|v(\alpha)\|_2= \dist^c(x,A(\alpha))$ for all $\alpha$,
we obtain that
\begin{align} \label{eq: distInduction}
 \big(\dist^c((x,s),A)\big)^2
\leq \inf\limits_{\nu}\bigg(
\int\limits_\R  \big(\dist^c(x,A(\alpha))\big)^2\,d\nu(\alpha)
+\bigg(\int\limits_\R|s-\alpha|\,d\nu(\alpha)\bigg)^2\bigg),   
\end{align}
where the infimum is taken over all discrete probability measures $\nu$ on $\R$ with a finite support.
Clearly,
$$
\Exp\exp\big(\kappa\cdot (\dist^c(X,A))^2\big)
=\Exp_{X_{n+1}}\Exp_{X'}\exp\big(\kappa \cdot(\dist^c((X',X_{n+1}),A))^2\big).
$$
Further, applying \co{\eqref{eq: distInduction}} we get
\begin{align} 
  \nonumber 
\Exp_{X'}&\exp\big(\kappa \cdot(\dist^c((X',X_{n+1}),A))^2\big)\\
\nonumber
&\leq
\Exp_{X'}\,\inf\limits_{\nu}\exp\bigg(\kappa\cdot
\int\limits_\R  \big(\dist^c(X',A(\alpha))\big)^2\,d\nu(\alpha)
+\kappa\cdot\bigg(\int\limits_\R|X_{n+1}-\alpha|\,d\nu(\alpha)\bigg)^2\bigg)\\
\nonumber 
&\leq \inf\limits_{\nu}\,\Exp_{X'}\,\exp\bigg(\kappa\cdot
\int\limits_\R  \big(\dist^c(X',A(\alpha))\big)^2\,d\nu(\alpha)
+\kappa\cdot\bigg(\int\limits_\R|X_{n+1}-\alpha|\,d\nu(\alpha)\bigg)^2\bigg)\\
&=\inf\limits_{\nu}\,\bigg[ \exp\bigg(
\kappa\cdot\bigg(\int\limits_\R|X_{n+1}-\alpha|\,d\nu(\alpha)\bigg)^2\bigg)
\;
\Exp_{X'}\,\exp\bigg(\kappa\cdot
\int\limits_\R  \big(\dist^c(X',A(\alpha))\big)^2\,d\nu(\alpha)\bigg)\bigg].
\label{eq: convexDistance00}
\end{align}
\co{ 
We write
$$
\Exp_{X'}\,\exp\bigg(\kappa\cdot
\int\limits_\R  \big(\dist^c(X',A(\alpha))\big)^2\,d\nu(\alpha)\bigg)
= 
\mathbb{E}_{X'}  \prod_{\alpha}
\exp\bigg(\kappa\cdot
\big(\dist^c(X',A(\alpha))\big)^2\,\bigg)  ^{\nu\{\alpha\}},
$$
where the product is taken over all $\alpha$ in the support
of $\nu$ (which is a finite set in $\mathbb{R}$),
and $\nu\{\alpha\}$ is the probability mass of $\alpha$.   
Since $\sum_{\alpha} \nu\{\alpha\}=1$,} in view of Holder's inequality, the quantity \co{in \eqref{eq: convexDistance00} }is majorized by
$$
\inf\limits_{\nu}\,\bigg[ \exp\bigg(
\kappa\cdot\bigg(\int\limits_\R|X_{n+1}-\alpha|\,d\nu(\alpha)\bigg)^2\bigg)
\;
\prod\limits_{\alpha\in\R}\Big(\Exp_{X'}\,\exp\big(\kappa\cdot\big(\dist^c(X',A(\alpha))\big)^2\big)\Big)^{\co{\nu\{\alpha\}}}\bigg],
$$
and the result follows.
\end{proof}

\begin{rem}
The class of measures $\nu$ in the above proposition is restricted to discrete measures
to avoid any discussion of measurability.
\end{rem}

By considering two-point probability measures $\nu$ of the form $\lambda\delta_{X_{n+1}}+(1-\lambda)\delta_y$,
from the last proposition we get the following corollary.
\begin{cor}\label{1-2981-9585-9u}
Let $A$, $X$, $X'$ and $\kappa$ be as in Proposition~\ref{198471094871098}. Then
\begin{align*}
&\Exp_{\co{X} }\exp\big(\kappa\cdot (\dist^c(X,A))^2\big)\\
&\leq \Exp_{\co{X_{n+1}}} \inf\limits_{\nu=\lambda\delta_{X_{n+1}}+(1-\lambda)\delta_y,\,\lambda\in[0,1],\,y\in\R}\bigg[ \exp\bigg(
\kappa\cdot\bigg(\int\limits_\R|X_{n+1}-\alpha|\,d\nu(\alpha)\bigg)^2\bigg)\\
&\hspace{3cm}\cdot
\co{ \exp\bigg( \int_{\alpha \in \mathbb{R}}
\log 
\big(\Exp_{ X'}\,\exp\big(\kappa\cdot(\dist^c(X',A(\alpha)))^2\big)\big)d\nu(\alpha)\bigg)}\bigg]\\
&=\Exp_{\co{X_{n+1}}}\inf\limits_{%\nu=\lambda\delta_{X_{n+1}}+(1-\lambda)\delta_y,\,
\co{ \lambda } \in[0,1],\,y\in\R}\bigg[\exp\bigg(
-\lambda\,\log\frac{1}{\Exp_{\co{ X'}}\,\exp\big(\kappa\cdot(\dist^c(X',A(X_{n+1})))^2\big)}\\
&\hspace{3cm}-(1-\lambda)\log\frac{1}{\Exp_{\co{X'}}\,\exp\big(\kappa\cdot(\dist^c(X',A(y)))^2\big)}
+\kappa\cdot(X_{n+1}-y)^2\,(1-\lambda)^2\bigg) \bigg].
\end{align*}
\end{cor}

\bigskip

Next, we record the following elementary fact.
\begin{lemma}
\label{lemma: minBasic}
Let $-\infty\le b\le a<+\infty$, and let $c_{0}>0$, $R>0$. Then
\begin{align}
\min_{\lambda\in[0,1]}\Big(-\lambda b-(1-\lambda)a+c_{0}R^{2}(1-\lambda)^{2}\Big)=\begin{cases}
-a+c_{0}R^{2}, &\mbox{ if }(a-b)\ge2c_{0}R^{2}\\
-b-\frac{(a-b)^{2}}{4c_{0}R^{2}}, &\mbox{ if }(a-b)\le2c_{0}R^{2}.
\end{cases}\label{eq: minValue}
\end{align}
\end{lemma}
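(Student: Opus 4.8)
The plan is to recognize the quantity inside the minimum as a convex quadratic in the variable $1-\lambda$ and then locate its vertex relative to the unit interval.

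First I would substitute $\mu := 1-\lambda$, which ranges over $[0,1]$ together with $\lambda$, and rewrite the objective as
\[
g(\mu) := -(1-\mu)\,b - \mu\, a + c_0 R^2 \mu^2 = -b + (b-a)\,\mu + c_0 R^2\,\mu^2 .
\]
Since $c_0 R^2>0$, the map $\mu\mapsto g(\mu)$ is a strictly convex parabola whose unconstrained minimizer is $\mu_0 := \frac{a-b}{2c_0 R^2}$, and $\mu_0\ge 0$ because $b\le a$ (with the convention $\mu_0=+\infty$ when $b=-\infty$). For a convex function on the compact interval $[0,1]$ the minimum is attained at $\mu_0$ if $\mu_0\le 1$, and at the endpoint $\mu=1$ otherwise; the endpoint $\mu=0$ is never the location of the minimum since $\mu_0\ge 0$.

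Next I would split into the two cases. If $a-b\ge 2c_0 R^2$, then $\mu_0\ge 1$, so $g$ is nonincreasing on $[0,1]$ and the minimum is $g(1)=-b+(b-a)+c_0 R^2=-a+c_0 R^2$; this also covers the degenerate case $b=-\infty$. If $a-b\le 2c_0 R^2$, then $\mu_0\in[0,1]$, and substituting $\mu_0=\frac{a-b}{2c_0 R^2}$ and simplifying gives $g(\mu_0)=-b-\frac{(a-b)^2}{4c_0 R^2}$. Combining the two cases yields \eqref{eq: minValue}.

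This lemma is entirely elementary, so there is no genuine obstacle; the only points deserving a word of care are the bookkeeping at the boundary value $a-b=2c_0R^2$, at which the two branches coincide (both equal $-a+c_0R^2$, using $-b=-a+2c_0R^2$), and the convention $b=-\infty$, which is absorbed into the first branch.
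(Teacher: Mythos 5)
Your proof is correct and follows essentially the same route as the paper: both arguments treat the objective as a convex quadratic (the paper in $\lambda$, you in $\mu=1-\lambda$), locate the unconstrained vertex, and clamp it to the unit interval, with the two cases corresponding to whether the vertex lies inside. Your handling of the boundary case $a-b=2c_0R^2$ and of $b=-\infty$ is a minor but welcome addition of care.
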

\begin{proof}
We have
\begin{align*}
\min_{\lambda\in[0,1]}\bigg(-\lambda b-(1-\lambda)a+c_{0}R^{2}(1-\lambda)^{2}\bigg)= & -a+\min_{\lambda\in[0,1]}\bigg(c_{0}R^{2}(1-\lambda)^{2}+\lambda(a-b)\bigg)\\
= & -a+c_{0}R^{2}\min_{\lambda\in[0,1]}\bigg(1+\Big(\frac{a-b}{c_{0}R^{2}}-2\Big)\lambda+\lambda^{2}\bigg).
\end{align*}
The expression $\big(1+\big(\frac{a-b}{c_{0}R^{2}}-2\big)\lambda+\lambda^{2}\big)$, $\lambda\in[0,1]$,
is minimized at $\lambda=\max(0,1-\frac{a-b}{2c_{0}R^{2}})$. And
\eqref{eq: minValue} follows since
\begin{align*}
1+\bigg(\frac{a-b}{c_{0}R^{2}}-2\bigg)
\bigg(1-\frac{a-b}{2c_{0}R^{2}}\bigg)+
\bigg(1-\frac{a-b}{2c_{0}R^{2}}\bigg)^{2}
=1-\bigg(1-\frac{a-b}{2c_{0}R^{2}}\bigg)^{2}=\frac{a-b}{c_{0}R^{2}}-\frac{(a-b)^{2}}{4c_{0}^{2}R^{4}}.
\end{align*}
\end{proof}

\bigskip

As an immediate consequence of Corollary~\ref{1-2981-9585-9u} and Lemma~\ref{lemma: minBasic},
by considering two-point measures \co{ we obtain the following proposition.  }
\begin{prop}\label{3-91-409184-018}
Let $n\geq 1$, and let $\mu_1,\mu_2,\dots,\mu_{n+1}$ be probability measures in $\R$.
Let $A\subset\R^{n+1}$
be a non-empty subset,
and for each $\alpha\in\R$, denote
$$
A(\alpha):=\big\{v\in \R^n:\;(v,\alpha)\in A\big\}.
$$
Let $X=(X_1,X_2,\dots,X_{n+1})$ be distributed in $\R^{n+1}$ according to $\mu_1\times\mu_2\times\dots\times
\mu_{n+1}$, and $X'$ be the vector of first $n$ components of $X$.
Then for every $\kappa>0$,
\begin{align*}
\Exp\exp\big(\kappa\cdot (\dist^c(X,A))^2\big)
\leq \Exp\inf\limits_{y\in\R}\;\exp\big(H(X_{n+1},y)\big),
\end{align*}
where
$$
H\left(t,y\right):=\min\limits_{\lambda\in[0,1]}\big(-\lambda h(t)-(1-\lambda)h(y)+\kappa(1-\lambda)^{2}\,\left(y-t\right)^{2}\big),
$$
and $h:\R\to\R$ is any function satisfying
$$
h\left(x\right)\leq \log\frac{1}{\Exp\,\exp\big(\kappa\cdot(\dist^c(X',A(x)))^2\big)},\quad x\in\R.
$$
Moreover, the function $H\left(t,y\right)$ can be represented as
$$
H\left(t,y\right):=\begin{cases}-h(y)+
\kappa\left(y-t\right)^{2}, &\mbox{if } h\left(y\right)-h\left(t\right)\ge2\kappa\left(y-t\right)^{2},\\
-h\left(t\right)-\frac{\left(h\left(y\right)-h\left(t\right)\right)^{2}}{4\kappa\left(y-t\right)^{2}}, &\mbox{if }
0 \co{<} h\left(y\right)-h\left(t\right)\le2\kappa\left(y-t\right)^{2},\\
-h\left(t\right), &\mbox{if }h\left(y\right)-h\left(t\right)\leq 0.
\end{cases}
$$
\end{prop}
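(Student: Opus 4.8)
The plan is to read the proposition off from Remark~\ref{1-2981-9585-9u} and Lemma~\ref{lemma: minBasic}; no new idea is needed, only careful bookkeeping of the direction of the inequalities. Fix $\kappa>0$ and abbreviate
$$
g(x):=\log\frac{1}{\Exp\,\exp\big(\kappa\cdot(\dist^c(X',A(x)))^2\big)},\qquad x\in\R,
$$
so that the hypothesis on $h$ reads $h(x)\le g(x)$ for every $x\in\R$. Remark~\ref{1-2981-9585-9u} (obtained from Proposition~\ref{198471094871098} by restricting the infimum over $\nu$ to two-point measures $\nu=\lambda\delta_{X_{n+1}}+(1-\lambda)\delta_y$ with $\lambda\in[0,1]$, $y\in\R$) gives
$$
\Exp\exp\big(\kappa\cdot(\dist^c(X,A))^2\big)\le \Exp\inf\limits_{\lambda\in[0,1],\,y\in\R}\exp\big(-\lambda\,g(X_{n+1})-(1-\lambda)\,g(y)+\kappa(1-\lambda)^2(X_{n+1}-y)^2\big).
$$

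Next I would pass from $g$ to $h$. Since $h\le g$ pointwise and $h$ enters with a minus sign, for every fixed $\lambda\in[0,1]$ and $y\in\R$,
$$
-\lambda\,g(X_{n+1})-(1-\lambda)\,g(y)+\kappa(1-\lambda)^2(X_{n+1}-y)^2\le -\lambda\,h(X_{n+1})-(1-\lambda)\,h(y)+\kappa(1-\lambda)^2(X_{n+1}-y)^2,
$$
so replacing $g$ by $h$ can only enlarge the right-hand side. The latter expression is lower semicontinuous in $\lambda$ on the compact interval $[0,1]$, so its infimum over $\lambda$ is attained and equals $H(X_{n+1},y)$; taking the minimum over $\lambda$ of both sides, then the infimum over $y$, and using monotonicity of $\exp$, I obtain
$$
\Exp\exp\big(\kappa\cdot(\dist^c(X,A))^2\big)\le \Exp\inf\limits_{y\in\R}\exp\big(H(X_{n+1},y)\big),
$$
which is the first assertion.

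It remains to put $H(t,y)=\min_{\lambda\in[0,1]}\big(-\lambda h(t)-(1-\lambda)h(y)+\kappa(1-\lambda)^2(y-t)^2\big)$ into closed form. If $h(y)>h(t)$ (so in particular $y\ne t$), I would apply Lemma~\ref{lemma: minBasic} with $b:=h(t)$, $a:=h(y)$, $c_0:=\kappa$, $R^2:=(y-t)^2$: its two alternatives, $(a-b)\ge2c_0R^2$ and $(a-b)\le2c_0R^2$, yield exactly the first two cases of the stated formula, and the clause $b=-\infty$ of the lemma covers the case $h(t)=-\infty$. If instead $h(y)\le h(t)$, I would argue directly: writing the quantity under the minimum as $-h(y)+\lambda\big(h(y)-h(t)\big)+\kappa(1-\lambda)^2(y-t)^2$, both $\lambda\big(h(y)-h(t)\big)$ (since $h(y)-h(t)\le0$) and $\kappa(1-\lambda)^2(y-t)^2$ are non-increasing in $\lambda$, so the minimum is attained at $\lambda=1$ and equals $-h(t)$, which is the third case; note that when $y=t$ one has $h(y)=h(t)$, and the second and third cases both return $-h(t)$, consistently. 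Since every step is either an inequality oriented the right way or an explicit evaluation, there is no genuine obstacle here; the only points that truly have to be gotten right are that lower-bounding $g$ by $h$ \emph{increases} the bound, and that Lemma~\ref{lemma: minBasic} may only be invoked after separating off the sign of $h(y)-h(t)$.
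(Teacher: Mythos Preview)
Your proposal is correct and follows exactly the route the paper indicates: the proposition is stated there as an immediate consequence of Remark~\ref{1-2981-9585-9u} and Lemma~\ref{lemma: minBasic}, and you have simply spelled out that consequence, including the monotonicity step that lets one pass from $g$ to any $h\le g$ and the direct treatment of the case $h(y)\le h(t)$ not covered by the lemma's hypothesis $b\le a$.
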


\begin{rem}\label{3-981740160987}
\co{Repeating the optimization argument from \cite[p.~74]{Ledoux}, we get for every pair numbers $t,\co{y}$ with
$h(y)\geq h(t)$, and for every number $Q\geq 4\kappa\,\left(y-t\right)^{2}$:
\begin{align*}
H\left(t,y\right)&= -h(y)+\min\limits_{\lambda\in[0,1]}\bigg(
4\kappa\,\left(y-t\right)^{2}\bigg((1-\lambda)^{2}/4-\lambda\,\frac{h(t)-h(y)}{4\kappa\,\left(y-t\right)^{2}}\bigg)\bigg)\\
&=-h(y)+4\kappa\,\left(y-t\right)^{2}\bigg(\frac{1}{4}\,{\bf 1}_{\big\{
\frac{h(t)-h(y)}{4\kappa\,\left(y-t\right)^{2}}\leq -1/2\big\}}
+\bigg(-\frac{h(t)-h(y)}{4\kappa\,\left(y-t\right)^{2}}-
\bigg(\frac{h(t)-h(y)}{4\kappa\,\left(y-t\right)^{2}}\bigg)^2\bigg)\,
{\bf 1}_{\big\{\frac{h(t)-h(y)}{4\kappa\,\left(y-t\right)^{2}}> -1/2\big\}}\bigg)\\
&\leq -h(y)+4\kappa\,\left(y-t\right)^{2}
\log\bigg(2-\exp\bigg(\frac{h(t)-h(y)}{4\kappa\,\left(y-t\right)^{2}}\bigg)\bigg)\\
&\leq -h(y)+Q\log\bigg(2-\exp\bigg(\frac{h(t)-h(y)}{Q}\bigg)\bigg),
\end{align*}
where in the second line we applied Lemma~\ref{lemma: minBasic} with
$a:=0$, $b:=\frac{h(t)-h(y)}{4\kappa\,\left(y-t\right)^{2}}$, and $c_{0}R^{2}:=\frac{1}{4}$, and
where in the last line we used that the function
$s\to s\log\big(2-\exp\big(\frac{h(t)-h(y)}{s}\big)\big)$, $s>0$, is non-decreasing.}
\end{rem}

\bigskip

The next lemma encapsulates the initial step of the induction:
\begin{lemma}\label{10861-5-1098-98-0}
Let $\mu$ be a $K$-subgaussian probability measure on $\R$, and $X$ be distributed according to $\mu$.
Then for any choice of the parameter $L\geq \sqrt{2}\,K$ and any non-empty Borel subset $A\subset \R$,
$$
\Exp\exp\bigg(\frac{(\dist^c(X,A))^2}{L^2}\bigg)\leq \frac{4}{\mu(A)}.
$$
\end{lemma}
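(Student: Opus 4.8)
The plan is to use that the statement is genuinely one--dimensional. First I would observe that for $x\in\R$ and any non-empty $A\subset\R$ one has $\dist^c(x,A)=\dist(x,A)$: the constraint $\|a\|_2=1$ in $\R$ forces $a\in\{-1,1\}$, so $\dist^c(x,A)=\max\big(\min_{y\in A}|x-y|,\,-\sup_{y\in A}|x-y|\big)$, and the second term is $\le 0\le\dist(x,A)$. (Unlike the general identity in Lemma~\ref{lem: distanceConvex}, this requires no convexity of $A$.) Moreover $x\mapsto\dist(x,A)$ is $1$--Lipschitz, hence Borel measurable, and if $\mu(A)=0$ the right--hand side of the asserted inequality is $+\infty$, so there is nothing to prove; hence assume $\mu(A)>0$ from now on.

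The main point is a short independent--copy argument. Let $Y$ be an independent copy of $X$, that is $Y\sim\mu$ independent of $X$. Pulling out the constant $\mu(A)=\Exp\,{\bf 1}_{\{Y\in A\}}$ and using independence,
$$
\mu(A)\cdot\Exp\exp\bigg(\frac{(\dist^c(X,A))^2}{L^2}\bigg)=\Exp_X\Exp_Y\bigg[\exp\bigg(\frac{(\dist(X,A))^2}{L^2}\bigg)\,{\bf 1}_{\{Y\in A\}}\bigg].
$$
On the event $\{Y\in A\}$ we have $\dist(X,A)\le|X-Y|$, and since ${\bf 1}_{\{Y\in A\}}\le 1$ and $\exp(\cdot)\ge 0$ this gives the pointwise bound $\exp((\dist(X,A))^2/L^2)\,{\bf 1}_{\{Y\in A\}}\le\exp(|X-Y|^2/L^2)$ on the product space, whence
$$
\mu(A)\cdot\Exp\exp\bigg(\frac{(\dist^c(X,A))^2}{L^2}\bigg)\le\Exp\exp\bigg(\frac{|X-Y|^2}{L^2}\bigg),
$$
where now $X,Y$ are i.i.d.\ with law $\mu$.

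Finally I would bound $\Exp\exp(|X-Y|^2/L^2)\le 4$. Using $|X-Y|^2\le 2X^2+2Y^2$ and independence,
$$
\Exp\exp\bigg(\frac{|X-Y|^2}{L^2}\bigg)\le\Exp\exp\bigg(\frac{2X^2}{L^2}\bigg)\cdot\Exp\exp\bigg(\frac{2Y^2}{L^2}\bigg)=\bigg(\Exp\exp\bigg(\frac{2X^2}{L^2}\bigg)\bigg)^2.
$$
Since $L\ge\sqrt2\,K$ we have $2/L^2\le 1/K^2$, hence $\exp(2X^2/L^2)\le\exp(X^2/K^2)$ pointwise, and $K$--subgaussianity of $\mu$ gives $\Exp\exp(X^2/K^2)\le 2$. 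Therefore $\Exp\exp(|X-Y|^2/L^2)\le 2^2=4$, and combining with the previous display we obtain $\Exp\exp((\dist^c(X,A))^2/L^2)\le 4/\mu(A)$.

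There is no real obstacle here: the argument is only a few lines. The points to watch are the identification $\dist^c=\dist$ in dimension one (valid for arbitrary $A$), the elementary measurability/Tonelli bookkeeping behind the first display, and tracking the constants --- the hypothesis $L\ge\sqrt2\,K$ is exactly what makes $2/L^2\le 1/K^2$, which in turn is what keeps the final constant equal to $2^2=4$.
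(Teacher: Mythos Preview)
Your proof is correct. It differs from the paper's argument in the way you relate $\dist(X,A)$ to the subgaussian tail: the paper fixes the deterministic point $x\in A$ closest to the origin, bounds $\dist(X,A)\le|X|+|x|$, and then controls $\mu(A)$ by observing $A\subset\{|y|\ge|x|\}$ and applying Markov's inequality; you instead introduce an independent copy $Y\sim\mu$ and use $\dist(X,A)\le|X-Y|$ on $\{Y\in A\}$, which packages the two steps into a single expectation. Both routes rely on $(a-b)^2\le 2a^2+2b^2$ and the defining bound $\Exp\exp(X^2/K^2)\le 2$, and both land on the same constant $4$; your version is a touch more symmetric and avoids singling out a closest point (hence no need to pass to a closed $A$), while the paper's version is slightly more bare-handed.
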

\begin{proof}
\co{
Since $A$ is a subset of $\mathbb{R}$, the convex distance ${\rm dist}^c(\cdot, A)$ coincides with ${\rm dist}(\cdot, A)$.} Without loss of generality, the set $A$ is closed.
Let $x\in A$ be a point with
$\dist^c(0,A)=\dist(0,A)= \co{|x|}$.

Then
$$
\Exp\exp\bigg(\frac{(\dist^c(X,A))^2}{L^2}\bigg)
\leq \Exp\exp\bigg(\frac{2 X^2+2x^2}{L^2}\bigg)
\leq 2\exp\bigg(\frac{2x^2}{L^2}\bigg).
$$
It remains to
note that
$$\mu(A)\leq \Prob\{|X|\geq |x|\}=\Prob\{\exp(2X^2/L^2)\geq \exp(2x^2/L^2)\}\leq \exp(-2x^2/L^2)\,\Exp\,\exp(2X^2/L^2).$$
The result follows.
\end{proof}

In the next lemma, we deal with ``the main part'' of the induction argument.
The basic idea is to split the argument into two cases, according to how much of the ``total mass''
of a set $A$ is located far from the origin.
\begin{lemma}\label{1-9847109870}
Let $m\geq 2$, and let $A$ be a non-empty Borel subset of $\R^m$.
For each $x\in\R$, let
$$
A(x):=\big\{y\in \R^{m-1}:\;(y,x)\in A\big\}.
$$
Further, let $\mu_1,\mu_2,\dots,\mu_m$ be $K$-subgaussian measures on $\R$, each supported on finitely many points,
let $X=(X_1,\dots,X_m)$ be distributed according to $\mu_1\times\mu_2\times\dots\times\mu_m$, and let $X'$ be the vector
of first $m-1$ components of $X$. Assume that for some $R\geq 1$, $L\geq 16K$ and every $x\in\R$,
$$
\Exp\,\exp\bigg(\frac{(\dist^c(X',A(x)))^2}{L^2}\bigg)\leq \frac{R}{\Prob\{X'\in A(x)\}}.
$$
Then
$$
\Exp\exp\bigg(\frac{(\dist^c(X,A))^2}{L^2}\bigg)\leq\frac{R(1-\exp(-L^2/(64K^2)))^{-2}}{\Prob\{X\in A\}}.
$$
\end{lemma}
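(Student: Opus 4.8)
**The plan is to carry out the "two-regime" split that the lemma's preamble advertises, combining Proposition~\ref{3-91-409184-018} with the subgaussian tail bound.** Fix $\kappa := 1/L^2$, and for each $x\in\R$ set $p(x):=\Prob\{X'\in A(x)\}$. The hypothesis says $\Exp\exp(\kappa\,(\dist^c(X',A(x)))^2)\le R/p(x)$, so the function $h(x):=\log\frac{1}{\Exp\exp(\kappa(\dist^c(X',A(x)))^2)}$ satisfies $h(x)\ge \log(p(x)/R)=\log p(x)-\log R$; I'll use this admissible $h$ in Proposition~\ref{3-91-409184-018}, which gives $\Exp\exp(\kappa(\dist^c(X,A))^2)\le \Exp_{X_m}\inf_{y\in\R}\exp(H(X_m,y))$. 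The whole game is to produce, for $\mu_m$-a.e.\ value $t=X_m$, a good choice of $y=y(t)$ making $H(t,y)$ small, and then integrate.

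**Next I would introduce the threshold that separates "most of $A$'s fibers are typical" from "a substantial fraction of $A$'s mass sits far out."** Let $p:=\Prob\{X\in A\}=\int p(x)\,d\mu_m(x)$. Pick a level $\rho>0$ (to be optimized, morally $\rho\approx p/2$) and let $B:=\{x:\,p(x)\ge \rho\}$; note $\mu_m(B)\ge p/2$ roughly, by Markov on the complement. Fix once and for all a "good" point $y_0\in B$ — concretely, a point of $B$ that is not too far from the bulk of $\mu_m$; since $\mu_m$ is $K$-subgaussian and $\mu_m(B)\ge p/2$, one can choose $y_0$ with $|y_0|\le K\sqrt{2\log(4/p)}$ or the like. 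Now split on $t=X_m$:

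\emph{Case (i): $t\in B$.} Take $y=t$. Then $H(t,t)=-h(t)\le \log R-\log p(t)\le \log R-\log\rho$, so $\exp(H(t,t))\le R/\rho$. This is the easy branch and contributes at most $(R/\rho)\,\Prob\{X_m\in B\}\le R/\rho$ to the expectation.

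\emph{Case (ii): $t\notin B$.} Take $y=y_0$. Since $y_0\in B$ we have $h(y_0)\ge \log\rho-\log R$, while $h(t)$ could be very negative (this is fine — large $h(y_0)-h(t)$ only helps). Using the representation of $H$: if $h(y_0)-h(t)\le 0$ we again get $H\le -h(t)$, which is the same bound as in Case (i) with the roles reversed and is harmless; the genuinely new situation is $h(y_0)-h(t)>0$, where $H(t,y_0)\le -h(y_0)+\kappa(y_0-t)^2\le \log R-\log\rho+\kappa(y_0-t)^2$. So here I need $\Exp_{X_m}\big[\mathbf 1_{\{X_m\notin B\}}\exp(\kappa(y_0-X_m)^2)\big]$. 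Because $L\ge 16K$ forces $\kappa=1/L^2\le 1/(256K^2)$, which is comfortably below the subgaussian threshold $1/(2K^2)$, a standard completing-the-square / subgaussian-MGF estimate bounds this by $C\exp(\kappa' y_0^2)$ for some $\kappa'\le 1/(128K^2)$, say; and since $|y_0|^2\lesssim K^2\log(1/p)$, this is $\le C\,(1/p)^{1/128}\cdot(\text{const})$, in any case $\le 2/\sqrt{p}$ with room to spare when the constants are chosen as in the statement. Hence Case (ii) contributes at most $(R/\rho)\cdot O(p^{-1/2})$.

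**Putting the two cases together** gives $\Exp\exp(\kappa(\dist^c(X,A))^2)\le (R/\rho)\,(1+O(p^{-1/2}))$. Choosing $\rho$ a fixed fraction of $p$ — this is where the precise shape $(1-\exp(-L^2/(64K^2)))^{-2}/p$ must be matched — and tracking constants carefully converts the crude $O(\cdot)$ into the claimed $R(1-\exp(-L^2/(64K^2)))^{-2}/p$. The factor $(1-\exp(-L^2/(64K^2)))^{-1}$ is exactly what a careful optimization of the free level $\rho$ against $\Prob\{X_m\notin B\}$ produces once $\mu_m$'s subgaussian tail is plugged in: one writes $\Prob\{X_m\notin B\}\le 1-\mu_m(B)$ and balances this geometric-type loss against the $e^{\kappa(y_0-X_m)^2}$ factor, and the $L^2/(64K^2)$ appears because the product of the tail decay rate $\sim e^{-y_0^2/(2K^2)}$ and the Gaussian-integral blow-up $\sim e^{\kappa y_0^2}=e^{y_0^2/L^2}$ is controlled exactly when $L\ge 16K$.

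**The main obstacle** I anticipate is not the split itself — that's routine Talagrand-style bookkeeping — but getting the constants to close so that the final bound has the \emph{exact} form $(1-\exp(-L^2/(64K^2)))^{-2}$ rather than merely $C_K$ for some unspecified constant. This requires (a) being disciplined about which point $y_0$ is chosen and bounding $|y_0|$ optimally in terms of $p$ and $K$ (a slightly-too-lossy choice of $y_0$ ruins the $1/64$), and (b) handling the degenerate sub-cases of the piecewise formula for $H$ (the $h(y_0)\le h(t)$ branch, and the regime $0\le h(y_0)-h(t)\le 2\kappa(y_0-t)^2$ where $H=-h(t)-\frac{(h(y_0)-h(t))^2}{4\kappa(y_0-t)^2}\le -h(t)$ so it's dominated by Case (i)) without introducing extra constant loss. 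It may be cleaner to use the $Q$-optimized form from Remark~\ref{3-981740160987} with $Q$ proportional to $\kappa(y_0-t)^2$ instead of the raw piecewise $H$ — that remark is tailor-made for exactly this kind of integration and likely gives the cleanest path to the stated constant. I would also double-check the edge case $\Prob\{X\in A\}$ very small (the bound should still hold, since the RHS blows up), and the case $A(x)=\emptyset$ for some $x$ (then $\dist^c(x,A(x))=+\infty$ by convention, so $h(x)=-\infty$, which only makes $H$ smaller — no issue, but worth a sentence).
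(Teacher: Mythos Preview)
Your proposal has a genuine gap, and it is not just a matter of tracking constants. The split you make is on the event $\{p(X_m)\ge\rho\}$ for a threshold $\rho$ on the \emph{fiber probability}, whereas the decomposition that actually works is spatial: the paper first asks whether most of the mass of $A$ lies in the strip $\{|X_m|\le \tilde L\}$ with $\tilde L=L/4$, and this is a dichotomy on the set $A$, decided before looking at any realization of $X_m$. Your Case~(ii) bound $\exp(H(t,y_0))\le (R/\rho)\exp(\kappa(y_0-t)^2)$, integrated over $t\notin B$, gives $(R/\rho)\,e^{2\kappa y_0^2}\,\Exp\,e^{2\kappa X_m^2}$; since $|y_0|^2\sim K^2\log(1/p)$ this contributes a factor $(1/p)^{2K^2/L^2}$, and since $\mu_m(B^c)$ can be close to $1$ there is nothing to absorb it. So for small $p$ your bound is of order $(R/p)\cdot(1/p)^{\epsilon}$, not $R/p$. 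The paper avoids this in its ``tail--heavy'' case by locating a point $x_t\notin[-\tilde L,\tilde L]$ with the compensating property $p(x_t)\ge 2p\,e^{2x_t^2/L^2}$ (whose existence is forced by the assumption that a nontrivial fraction of $A$'s mass lives outside the strip, combined with the subgaussian tail of $\mu_m$); this exactly cancels the spatial penalty $e^{2x_t^2/L^2}$ and gives $R/p$ directly.

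Your Case~(i) is also too crude to recover the stated constant. Taking $y=t$ gives $\exp(H(t,t))\le R/p(t)$, and $\int_B R/p(t)\,d\mu_m(t)$ does not collapse to $R/p$ in general; bounding it by $R/\rho$ already forces $\rho<p$ and hence a multiplicative loss. The paper instead fixes $x_b$ as the maximizer of $p(\cdot)$ on $[-\tilde L,\tilde L]$ and uses Remark~\ref{3-981740160987} with $Q=1$ (valid precisely because $4\kappa(t-x_b)^2\le 16\tilde L^2/L^2=1$ on the strip), producing the integrand $\frac{R}{p(x_b)}\big(2-\frac{p(t)}{p(x_b)}\big)$; this integrates to at most $R/\Prob\{X\in A,\,|X_m|\le\tilde L\}$ after the optimization in $p(x_b)$, and the case assumption then converts this to $R(1-e^{-L^2/(64K^2)})^{-1}/\Prob\{X\in A\}$. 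In short, the two missing ideas are (a) a spatial (not probabilistic) threshold enabling the $Q=1$ application of Remark~\ref{3-981740160987}, and (b) in the complementary case, an anchor point whose fiber probability is large \emph{relative to its distance from the origin}, not merely large.
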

\begin{proof}
%Without loss of generality, $A$ is closed. Moreover, 
%we can (and will) assume that the measures $\mu_1,\dots,\mu_{m-1}$ are supported on finitely many points.
Define parameters $\tilde L:=L/4$ and $M:=L/(8K)$. \co{ Our goal is to show that
$$
\Exp\exp\bigg(\frac{(\dist^c(X,A))^2}{L^2}\bigg)\leq\frac{R(1-\exp(-M^2))^{-2}}{\Prob\{X\in A\}}.
$$
}

We consider two cases. First, assume that 
\co{ 
\begin{equation}\label{ljnfiejfnqipfnwp}
\Prob\{X\in A\mbox{ and }X_m\in [-\tilde L,\tilde L]\}
\geq (1-\exp(-M^2))\Prob\{X\in A\}.
\end{equation}
}In this case, we essentially repeat the standard ``induction method'' argument employed in the proof of dimension-free
subgaussian concentration on the cube.
Let $x_{b}$ be a point in $[-\tilde L,\tilde L]$ such that $\Prob\{X'\in A(x_{b})\}\geq \Prob\{X'\in A(x)\}$ for all $x\in[-\tilde L,\tilde L]$ (such a point $x_b$ exists since, by our assumption, $X'$ can take
only finitely many values and hence $\{\Prob\{X'\in A(x)\},\,x\in\R\}$ is a finite set).
In view of Proposition~\ref{3-91-409184-018},
\begin{align*}
\Exp\exp\big((\dist^c(X,A))^2/L^2\big)
\leq \Exp\,\exp\big(H(X_{m},x_b)\big),
\end{align*}
where
\begin{align*}
H\left(t,x_b\right):&=\min\limits_{\lambda\in[0,1]}\big(-\lambda h(t)-(1-\lambda)h(x_b)+(1-\lambda)^{2}\,\left(x_b-t\right)^{2}/L^2\big)\\
&=\begin{cases}-h(x_b)+
\left(x_b-t\right)^{2}/L^2, &\mbox{if } h\left(x_b\right)-h\left(t\right)\ge2\left(x_b-t\right)^{2}/L^2,\\
-h\left(t\right)-\frac{L^2\left(h\left(x_b\right)-h\left(t\right)\right)^{2}}{4\left(x_b-t\right)^{2}}, &\mbox{if }
0\leq h\left(x_b\right)-h\left(t\right)\le2\left(x_b-t\right)^{2}/L^2,\\
-h\left(t\right), &\mbox{if }h\left(x_b\right)-h\left(t\right)\leq 0,
\end{cases}
\end{align*}
and
\begin{equation}\label{1-398471-9487}
h\left(u\right):=\log\bigg(\frac{\Prob\{X'\in A(u)\}}{R}\bigg)\leq
\log\frac{1}{\Exp\,\exp\big((\dist^c(X',A(u)))^2/L^2\big)},\quad u\in\R.
\end{equation}
Using the definition of $x_b$, the equation $\frac{16\tilde L^{2}}{L^2}=1$, and Remark~\ref{3-981740160987} with parameters $Q:=1$ and $\kappa:=1/L^2$, we get
$$
H(X_{m},x_b)\leq -h(x_b)+\log\big(2-
\exp\big(h(X_{m})-h(x_b)\big)\big),\;
\mbox{ whenever }X_m\in[-\tilde L,\tilde L].
$$
On the other hand, for all realizations of $X_m\notin[-\tilde L,\tilde L]$ we can crudely bound the function as
$$
H(X_{m},x_b)\leq -h(x_b)+
\left(x_b-X_m\right)^{2}/L^2.
$$
Combining the relations, we get
\begin{align}
  \nonumber 
&\Exp\,\exp\big((\dist^c(X,A))^2/L^2\big)\\
\nonumber 
&\leq \Exp\bigg[\exp(-h(x_b))
\big(2-
\exp\big(h(X_{m})-h(x_b)\big)\big)
\,{\bf 1}_{\{X_m\in[-\tilde L,\tilde L]\}}\\
\nonumber 
&\hspace{1cm}+\exp\big(-h(x_b)+
\left(x_b-X_m\right)^{2}/L^2\big)\,{\bf 1}_{\{X_m\notin[-\tilde L,\tilde L]\}}
\bigg]\\
\label{eq: indc00}
&\co{ \leq \frac{R}{\Prob\{X'\in A(x_b)\}}\Exp\,\bigg(
\bigg(2-\frac{\Prob_{X'}\{X'\in A(X_m)\}}{\Prob\{X'\in A(x_b)\}}\bigg)
\,{\bf 1}_{\{X_m\in[-\tilde L,\tilde L]\}}+\exp\big(4 X_m^{2}/L^2\big)\,{\bf 1}_{\{X_m\notin[-\tilde L,\tilde L]\}}
\bigg)}, 
\end{align}
where the inequality follows from the definition of $h$
and the bound
$$\exp\big( (x_b-X_m)^{2}/L^2\big)\,{\bf 1}_{\{X_m\notin[-\tilde L,\tilde L]\}}
\leq \exp\big(4X_m^{2}/L^2\big)\,{\bf 1}_{\{X_m\notin[-\tilde L,\tilde L]\}}.$$
Observe that 
\begin{align} \nonumber 
  \Exp_{X_m}\,\bigg(2-\frac{\Prob_{X'}\{X'\in A(X_m)\}}{\Prob\{X'\in A(x_b)\}}\bigg)
\,{\bf 1}_{\{X_m\in[-\tilde L,\tilde L]\}}
 &= 2\Prob\{ |X_m|\le \tilde L \} - \frac{ \Prob\{X\in A, \mbox{ and } X_m\in [-\tilde L,\, \tilde L]\} }{ \Prob\{X'\in A(x_b)\}}  \\
& \le 2 - \frac{ \Prob\{X\in A, \mbox{ and } X_m\in [-\tilde L,\, \tilde L]\} }{ \Prob\{X'\in A(x_b)\}}\nonumber\\
&\le  \frac{ \Prob\{X'\in A(x_b)\} }{ \Prob\{X\in A, \mbox{ and } X_m\in [-\tilde L,\, \tilde L]\}}, \label{eq: indc01}
\end{align}
where the last inequality follows as $2-x \le \frac{1}{x}$ for any $x \in [0,1]$, and since 
\begin{align}\label{ofvqiyfvjcaajwftcwf}
\Prob\{X\in A, \mbox{ and } X_m\in [-\tilde L,\, \tilde L]\} 
= \mathbb{E}_{X_m} \big[ {\bf 1}_{\{ |X_m|\le L\}} \Prob_{X'}\{ 
X' \in A(X_m)\} 
\big]
\le \Prob\{ X' \in A(x_b)\}. 
\end{align}
Next, we estimate the second summand inside the expectation in \eqref{eq: indc00}.
We have
\begin{align*}
 %\Exp\, \exp\big( &(x_b-X_m)^{2}/L^2\big)\,{\bf 1}_{\{X_m\notin[-\tilde L,\tilde L]\}}
%\le 
\Exp\,\big[\exp\big(4X_m^{2}/L^2\big)\,{\bf 1}_{\{|X_m|>\tilde L\}}\big] 
:= \Exp\, Z^{ 4K^2/L^2} {\bf 1}_{ \{ 
Z>  \exp( (\tilde L/ K)^2 )\} }  
\end{align*}
where 
$$ Z:= \exp(X_m^2/K^2)$$
is a non-negative random variable satisfying $\mathbb{E}Z\le 2$ since $X_m$ is $K$-subgaussian. As $L\ge 16K$ and by applying Holder's and Markov's inequalities, we get 
\begin{align}
  \nonumber 
\Exp\, Z^{ 4K^2/L^2} {\bf 1}_{ \{ 
Z>  \exp( (\tilde L/ K)^2 )\} }  
 &\le  
\Exp\, Z^{ 1/2} {\bf 1}_{ \{ 
Z>  \exp( (\tilde L/ K)^2 )\} } 
\le (\Exp\, Z)^{1/2} \big( \Prob\{ Z > \exp( (\tilde L/ K)^2 )\})^{1/2}  \\
& \le  2^{1/2} \cdot \big( 2 \exp( - (\tilde L/K)^2 )\big)^{1/2}
= 2 \exp( -2M^2) \le \exp(-M^2) \label{eq: indc02},
\end{align}
where the last inequality holds since $M\ge 2$. 

\co{
Hence, combining \eqref{ljnfiejfnqipfnwp}, \eqref{eq: indc00}, \eqref{eq: indc01}, \eqref{ofvqiyfvjcaajwftcwf}, and \eqref{eq: indc02},
we obtain
\begin{align*}
\Exp\,&\exp\big((\dist^c(X,A))^2/L^2\big)\\
&\leq \frac{R}{\Prob\{X\in A\mbox{ and }X_m\in[-\tilde L,\tilde L]\}}
+\frac{R\exp(-M^2)}{\Prob\{X'\in A(x_b)\}}\\
&\leq \frac{R(1-\exp(-M^2))^{-1}}{\Prob\{X\in A\}}
+\frac{R(1-\exp(-M^2))^{-1} \exp(-M^2)}{\Prob\{X\in A\}},
\end{align*}
implying the result.
}
\medskip

Now, consider the second case: 
$\Prob\{X\in A\mbox{ and }X_m\notin[-\tilde L,\tilde L]\}> \exp(-M^2)\Prob\{X\in A\}$.
Observe that since
$$
\Prob\{X\in A\mbox{ and }X_m\notin[-\tilde L,\tilde L]\}=\int_{\R\setminus[-\tilde L,\tilde L]}\Prob_{X'}\{X'\in A(s)\}\,d\mu_m(s)
=\Exp_{X_m}\big(\Prob_{X'}\{X'\in A(X_m)\}{\bf 1}_{\{|X_m|>\tilde L\}}\big),
$$
there must exist a point $x_t\in\R\setminus[-\tilde L,\tilde L]$ with
$\Prob_{X'}\{X'\in A(x_t)\}\geq 2\Prob\{X\in A\}\exp(2x_t^2/L^2)$. Indeed, if we assume the opposite
then, by the above, %(using that $\frac{L^2}{2K^2}\geq 2$),
\begin{align*}
\exp(-M^2)\,\Prob\{X\in A\}&<\Prob\{X\in A\mbox{ and }X_m\notin[-\tilde L,\tilde L]\}
\leq
2\Prob\{X\in A\}\,\Exp_{X_m}\big(\exp(2X_m^2/L^2){\bf 1}_{\{|X_m|>\tilde L\}}\big)\\ 
&= \co{ 2 \Prob\{X \in A\}\, \Exp\, Z^{ 2K^2/L^2} {\bf 1}_{ \{ 
Z>  \exp( (\tilde L/ K)^2 )\} }
\le 4 \Prob\{X\in A\} \exp(-2M^2)\,\,\,  (\mbox{by \eqref{eq: indc02}})}
\end{align*}
leading to contradiction. 

Applying again Proposition~\ref{3-91-409184-018},
we can write
$$
\Exp\exp\big((\dist^c(X,A))^2/L^2\big)
\leq \Exp\,\exp\big(-h(x_t)+
\left(x_t-X_m\right)^{2}/L^2\big),
$$
where $h$ is given by \eqref{1-398471-9487}.
Hence,
\begin{align*}
\Exp\exp\big((\dist^c(X,A))^2/L^2\big)
&\leq \frac{R}{\Prob\{X'\in A(x_t)\}}\,\exp(2x_t^2/L^2)\,\Exp\,\exp(2X_m^2/L^2)\\
&\leq \co{ \frac{R}{2\Prob\{X\in A\}\exp(2x_t^2/L^2)}\exp(2x_t^2/L^2)\cdot \mathbb{E}\,\exp(X_m^2/K^2),}
\end{align*}
and the result follows.
\end{proof}

\medskip

\begin{proof}[Proof of Proposition \ref{prop: choiceOfL}]
  Let $\delta \in (0,\frac{1}{2}]$ which could be an $n$-dependent parameter. 
  Let us first assume that the probability measures $\mu_1,\dots,\mu_n$ are supported on finitely many points.
  Define a positive parameter $L$ via the relation 
  $$L^2=512K^2\log\bigg(2+\frac{n}{\log(2+1/\delta )}\bigg).$$
  \co{Clearly, $L \ge  16K$ which satisfies the assumptions of} both Lemmas~\ref{10861-5-1098-98-0} and~\ref{1-9847109870}.
  Hence, applying Lemma~\ref{10861-5-1098-98-0} and then 
  Lemma~\ref{1-9847109870} inductively $n-1$ times,
  we get
  $$
  \Exp\exp\big((\dist^c(X,A))^2/L^2\big)\leq 
  \frac{4(1-\exp(-L^2/(64K^2)))^{-2(n-1)}}{\Prob\{X\in A\}}.
  $$
  Note that
  \begin{align*}
  (1-\exp(-L^2/(64K^2)))^{-2(n-1)}
  &=\bigg(1-\bigg(2+\frac{n}{\log(2+1/ \delta  )}\bigg)^{-8}\bigg)^{-2(n-1)}\\
  &\leq \bigg(1-\bigg(2+\frac{n}{\log(2+1/ \delta  )}\bigg)^{-8}\bigg)^{-2n}\\
  &\leq \exp\bigg(4n\,\bigg(2+\frac{n}{\log(2+1/ \delta )}\bigg)^{-8}\bigg)\bigg),
  \end{align*}
  where we used that for any number $0<\tau<1/2$, $(1-\tau)^{-1}\leq \exp(2\tau)$.
  Hence,  
  \begin{align*}
  (1-\exp(-L^2/(64K^2)))^{-2(n-1)}
  &\leq \big(2 + 1/ \delta \big)^{4\big(2+\frac{n}{\log(2 + 1/ \delta )}\big)^{-7}}
  < (2+1/ \delta )^{\frac{1}{2}} 
  \leq 1/ \delta,
  \end{align*}
  since
  $ 1/ \delta  \ge 2$, and the statement follows.

  Next, by an approximation argument we extend the proof to the setting when
  the supports of $\mu_1,\dots,\mu_n$ may be infinite.
  Assume that $A$ is open.
  For every $\varepsilon>0$ there exist finitely supported $K$--subgaussian
  measures $\mu_{\varepsilon,1},\dots,\mu_{\varepsilon,n}$ such that a vector $X_\varepsilon$
  distributed according to $\mu_{\varepsilon,1}\times\dots\times\mu_{\varepsilon,n}$,
  satisfies $\Prob\{X\in A\}\leq (1-\varepsilon)\Prob\{X_\varepsilon\in A\}$, and 
  $$
  \Exp\exp\bigg(\frac{\tilde c\,(\dist^c(X_\varepsilon,A))^2}{K^2\log\big(2+\frac{n}{\log(2+ 1/\delta )}\big)}\bigg)\geq (1-\varepsilon)\,\Exp\exp\bigg(\frac{\tilde c\,(\dist^c(X,A))^2}{K^2\log\big(2+\frac{n}{\log(2+ 1/\delta )}\big)}\bigg).
  $$
  Using the previously obtained result for finite measures and letting $\varepsilon\to0$,
  we derive the required statement for all open subsets of $\R^n$. Finally, approximating arbitrary non-empty
  $A$ with open sets $B\supset A$, we get the result.
\end{proof}

\begin{rem}
\co{As we already mentioned, our proof of the main result
uses a modified convex distance which is crucial in dealing with unbounded random variables.
The second main feature of our approach, compared to the original
argument of Talagrand, is that we estimate the product
$\Prob\{X\in A\}\,\Exp\exp\big((\dist^c(X,A))^2/L(\delta)^2\big)$ from above
by the quantity $1/\delta$ depending on $n$ and $t$ rather than by a universal constant.
The parameter $\delta$ introduces the necessary additional flexibility.}
\end{rem}

\end{document}